\title{On zero-sum spanning trees and zero-sum connectivity}
\date{}
\begin{document}
\newtheorem{theorem}{Theorem}[section]
\newtheorem*{theoremnonum}{Theorem}
\newtheorem{definition}{Definition}[section]
\newtheorem{conjecture}[theorem]{Conjecture}
\newtheorem{proposition}[theorem]{Proposition}
\newtheorem{corollary}[theorem]{Corollary}
\newtheorem{lemma}[theorem]{Lemma}
\newtheorem{example}[theorem]{Example}
\newtheorem*{problem}{Problem}
\newcommand{\bal}{{\rm bal}}
\newcommand{\sbal}{{\rm sbal}}
\newcommand{\ot}{{\rm ot}}
\newcommand{\ex}{{\rm ex}}
\newcommand{\diam}{{\rm diam}}
\newcommand{\tot}{{\rm tot}}
\newcommand{\half}{{\rm Half}}

\newcommand{\ad}[1]{\textcolor{blue}{#1}}

\DeclareGraphicsExtensions{.pdf,.png,.jpg}

\author{Yair Caro \\ Department of Mathematics\\ University of Haifa-Oranim \\ Israel \and Adriana Hansberg \\  Instituto de Matem\'{a}ticas \\ UNAM Juriquilla \\ Quer\'e{}taro, Mexico\\ \and \ Josef  Lauri\\ Department of Mathematics \\ University of Malta
\\ Malta \and Christina Zarb \\Department of Mathematics \\University of Malta \\Malta  }

\maketitle

\begin{abstract}
	We consider $2$-colourings $f :  E(G)  \rightarrow \{ -1  ,1 \}$ of the edges of a graph $G$ with colours $-1$ and $1$ in $\mathbb{Z}$. A subgraph $H$ of $G$ is said to be a zero-sum subgraph of $G$ under $f$ if $f(H) := \sum_{e\in E(H)} f(e) =0$. We study the following type of questions, in several cases obtaining best possible results: Under which conditions on $|f(G)|$ can we guarantee the existence of a zero-sum spanning tree of $G$? The types of $G$ we consider are complete graphs, $K_3$-free graphs, $d$-trees, and maximal planar graphs. We also answer the question of when any such colouring contains a zero-sum spanning path or a zero-sum spanning tree of diameter at most $3$, showing in passing that the diameter-$3$ condition is best possible. Finally, we give, for $G = K_n$, a sharp bound on $|f(K_n)|$ by which an interesting zero-sum connectivity property is forced, namely that any two vertices are joined by a zero-sum path of length at most $4$.   
	
	One feature of this paper is the proof of an Interpolation Lemma leading to a Master Theorem from which many of the above results follow and which can be of independent interest. 
\end{abstract}

	\section{Introduction}
	Perhaps the most obvious point of reference for the origin of problems on zero-sum trees is the almost trivial observation that, given a graph $G$, either $G$ or its complement $\overline{G}$, is connected, that is, has a spanning tree.  A two-colouring formulation of this problem gives it a Ramsey-theoretic flavour: if the edges of complete graph $K_n$ are coloured with two colours, then $K_n$ must have a monochromatic spanning tree. This quest for a monochromatic spanning tree has been modified in various ways which turn simple results like this into interesting research questions. For example, one can let the colours be $0$ and $1$ and then require that the sum of the colours on the edges of the spanning tree is even, making it a problem in zero-sum Ramsey-theory over $\mathbb{Z}_2$, which is completely solved \cite{caro1994complete, caro1999uniformity, CaroYuster1998, WilsonWong2014}. The problem now takes on a more decidedly Ramsey-theoretic nature because the right question to ask would be: is there an $N$ such that, for all $n>N$, $n$ odd, any $0$-$1$ colouring of $E(K_n)$ contains a spanning tree such that the sum of the colours on the edges of the spanning tree is even. The extension to colourings with elements of $\mathbb{Z}_n$, the cyclic group of integers modulo $n$, and requiring the sum of the colours on the edges of the spanning tree to be equal to  $0 \bmod n$  is then clear. 
	
	In this paper, we shall consider various variations of this problem, always asking that the required subtree is zero-sum (or almost zero-sum when appropriate - to be defined later), that is, the sum of colours on its edges is zero in some domain. In fact, we will study the variant when the edges of the complete graph are coloured with colours $-1$ and $1$, discovering, on the way, some surprising differences between this case and the case when the colours used are $-1, 0$ and $1$. 
	
	Another direction which we shall investigate is the same as above but just changing the nature of the subtree required, for example asking for a spanning path, or a spanning subtree of diameter $3$. We shall also consider, in what we call zero-sum connectivity, namely that any two vertices of $K_n$ are joined by a zero-sum path of length at most $4$. 
	
	Another variation we consider is when the host graph, whose edges we are colouring, is not the complete graph. In this context, we consider complete bipartite graphs, maximal planar graphs, and maximal $d$-degenerate graphs.
	
	Many of these results can stand alone as a separate instance of the problem of finding zero-sum spanning subtrees. However, one of the main aims of this paper is also to illustrate some main techniques which unite them. We therefore consider Section 3 
to be crucial in this paper, where we
prove an Interpolation Lemma, and a what we call the Master Theorem, from which many of the results in the other sections follow and which might also have independent interest.
	
	In the next section, we shall give the main definitions we will require, along with some background in the form of existing results, which will also serve as a benchmark and guideline for our main zero-sum results in this paper.
	
	\section{Main definitions, notation and some background}
	
Let $G$ be a graph and let $f:E(G)  \to A$  be a mapping with range values in an abelian group $A$. For a subgraph $H \le G$, we set $f(H) = \sum_{e \in E(H)} f(e)$ as the \emph{weight} of $H$ under $f$, where the sum is taken in $A$. For the case that  $A = \mathbb{Z}_k$, the cyclic group on $k$ elements, and $H$ is a graph such that $k$ divides $e(H)$, we say that $K_n$ has a \emph{zero-sum $H$} under $f$ if $f(H) := \sum_{e  \in E(H)} f(e) \equiv   0 \pmod k$. The least $n$ such that $K_n$ contains a zero-sum $H$ under $f$ for every $f:E(K_n)  \rightarrow \mathbb{Z}_k$ is denoted by $R(H,\mathbb{Z}_k)$ and is called the \emph{$\mathbb{Z}_k$-zero-sum Ramsey number of $H$} (for a survey see \cite{caro1996zero}).
	
The particular case that $H$ is a spanning tree of $K_n$ was one of the first problems raised in the emergence of zero-sum Ramsey theory around 1990. The question of the existence of a zero-sum$\pmod n$ spanning tree for every $f:E(K_{n+1}) \rightarrow \mathbb{Z}_n$,  was solved affirmatively in the case when $n$ is prime in \cite{bialostocki1990zero}, and for every $n$ in \cite{furkleit}, and in further generality in  \cite{schrijver1991simpler}. 
	
	Also, in classical Ramsey theory, much research has been done trying  to gain more knowledge about the forced structure of monochromatic spanning trees in an edge-coloured complete graph and, in particular, it is known that there is always  a monochromatic spanning tree of height two (hence diameter four),  as well as  spanning trees called brooms \cite{bialostocki2001either,gyarfas2011large}.	However, efforts in this vein to get some further knowledge on the structure
	of forced zero-sum$\pmod n$ spanning trees seems to not have been developed further. 

The appearance of new versions of zero-sum problems, where the range set is not ${\mathbb Z}_k$, but elements in $\mathbb{Z}$,  mostly $\{ -1,0,1\}$ or $\{ -1,1\}$,  started in \cite{caro2016zero} (see also \cite{caro2018unavoidable,caro2019zero2,caro2019zero,caro2019small}), although an early paper about possible weights of spanning trees of the $n$-dimensional cube $Q_n$ under  a $\{ -1 ,1\}$-colouring of its edges appeared in \cite{HARARY199385}. One of the first questions considered was:  under what conditions  does $f:E(K_n) \rightarrow  \{ -1,0,1\}$  force a zero-sum (over $\mathbb{Z}$)  spanning tree?  The exact solution is:  whenever  $f : E(K_n) \rightarrow \{ -1,0,1\}$ is such that $n \equiv 1  \pmod 2$  and   the absolute weight $|f(K_n)|  \leq  n - 2$,  then there exists a zero-sum (over $\mathbb{Z}$) spanning tree, and this bound is sharp \cite{caro2016zero}.\\
	
\noindent 	
In this paper, we shall concentrate on colouring the edges of a graph $G$, mostly the complete graph $K_n$, with colours $-1$ and $1$.  To consider this problem, we need to introduce some further notation which will be used in the sequel.  For $f :  E(G)  \rightarrow \{ -1  ,1 \}$  we write $E(-1)  = \{  e  \in E(G) :  f(e)  = -1  \}$, $E(1) =  \{  e \in   E(G)  :  f( e)  = 1  \}$, $e(-1) =  |E(-1)|$ and $e(1) = |E(1)|$. 
If $f(H) = 0$, we say that $H \le G$ is \emph{zero-sum} under $f$, while if  $|f(H)| = 1$, we say that $H$ is \emph{almost zero-sum}. Of course, the first can only happen if $e(H) \equiv 0 \pmod 2$ and the latter only if $e(H) \equiv 1 \pmod 2$.

The structure of this paper is briefly as follows.	In the next section, we shall prove important results which will underpin most of the the particular zero-sum results which will be presented further on. Analogous to the above situation  with colours$-1$, $0$ and $1$,  we shall consider in Section 4 the existence of zero-sum spanning trees for edge-colourings $f :  E(K_n) \rightarrow \{ -1 ,1\}$. As we shall see, somewhat counterintuitively,  the absence of $0$ in the range of $f$ forces a much less tight bound on  $|f( K_n)|$ than the case where $0$ is allowed, namely $|f(K_n)|  \leq \binom{n}{2} - 2 \binom{(n-1)/2}{2}  = \frac{( n-1)(n-3)}{4}$.  Observe that a condition of type $|f(K_n)|  \leq h(n)$ is equivalent to asking $\min\{e(-1), e(1)\} \ge \frac{1}{2} \left({n \choose 2} - h(n) \right)$. Hence, the appearance of conditions on $\min \{ e(-1)  , e( 1) \}$ is typical of all zero-sum problems over $\mathbb{Z}$  and the effect of  the difference  between  the range $\{ -1,0 ,1\}$  and $\{ -1,1\}$  has been already shown to be somewhat dramatic \cite{caro2019zero2}.  We also give in this section a sharp result on zero-sum spanning paths and spanning trees with diameter at most $3$ in a complete graph whose edges are coloured with $-1$ and $1$. This result is also inspired by one of the starting points of this paper, namely the folklore  variants of the result: if $G$ is a graph of diameter $\diam(G) \geq 4$,  then $\diam(\overline{G})  \leq 2$ \cite{harary1985diameter}.  
	
In Section 5, we shall study the analogous case of zero-sum spanning trees  for $\{-1,1\}$-colourings of biparite graphs, $d$-trees and maximal planar graphs, giving in all three cases best possible bounds.
		
	In Section 6, we shall consider zero-sum connectivity, where we will require that any two vertices of $K_n$ are joined by a zero-sum path of length at most 4.
	
	Finally, in the concluding section, we shall present some ideas for further investigation.



\section{The master theorem for zero-sum and almost-zero sum spanning subgraphs}
\subsection{Definitions and examples}

We give some definitions and results which will be used in the sequel.

\medskip

\noindent \textbf{Edge replacement}\\
 Given a subgraph $H$ of a graph $G$, we say that a subgraph $H'$ of $G$ is obtained  by an \emph{edge-replacement} from $H$ if there are edges $e \in E(H)$ and $e' \in E(H')\backslash E(H)$ such that $E(H')= \left( E(H) \backslash \{e\}\right) \cup \{e'\}$.

\medskip

\noindent \textbf{Closed family}\\
 A family $\mathcal{F}$ of subgraphs of a graph $G$ is called a \emph{closed family in $G$}  if,  for  any two subgraphs $H$ and $H'$ of $G$ which are isomorphic to members of $\mathcal{F}$, there is a chain $H = H_1, H_2, \ldots, H_q = H'$ of subgraphs of $G$, each one isomorphic to some member of $\mathcal{F}$, such that, for $1 \le i \le q-1$, $H_{i+1}$ is obtained from $H_i$ by an edge-replacement.\\

When the family $\mathcal{F}$ is closed in the graph  $K_n$ we shall just say  that $\mathcal{F}$ is a \emph{closed family}, omitting the host graph $K_n$.

A classic example of a closed family is  the family of spanning trees of a connected graph $G$, which form 
the basis of the so called Cycle Matroid of $G$. The next lemma is taken from \cite{welsh2010matroid}.

\begin{lemma} \label{matroids}
Let $G$ be a connected graph. Then the following statements are valid.
\begin{enumerate}
\item The family of all forests contained in $G$ forms the Cycle Matroid denoted by $M(G)$.
\item The basis of this matroid is the set of all spanning trees of $G$.
\item For any two spanning trees $T$ and $T'$ of $G$, there is a chain of spanning trees $T = T_0 , T_1, \ldots,T_q = T'$,  such that , for $1 \le i \le q-1$, $T_{i+1}$ is obtained from $T_i$ by an edge replacement.
\end{enumerate}
\end{lemma}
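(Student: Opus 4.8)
The plan is to verify the three statements in sequence, essentially recovering the standard development of the cycle matroid. For part 1, I would take the ground set to be $E(G)$ and declare an edge set \emph{independent} if it induces an acyclic subgraph of $G$, i.e. a forest; let $\mathcal{I}$ denote this family. I would then check the matroid independence axioms: the empty edge set lies in $\mathcal{I}$; every subset of a member of $\mathcal{I}$ is again in $\mathcal{I}$ (deleting edges from an acyclic graph leaves it acyclic); and, for the augmentation axiom, if $F_1, F_2 \in \mathcal{I}$ with $|F_1| < |F_2|$, I would use the fact that a forest with $k$ edges on $n := |V(G)|$ vertices spans exactly $n - k$ components. Since $F_2$ then has strictly fewer components than $F_1$, some component of $F_2$ meets at least two components of $F_1$, and the unique path in $F_2$ joining two such $F_1$-components contains an edge $e \in F_2 \setminus F_1$ whose endpoints lie in distinct components of $F_1$; hence $F_1 \cup \{e\} \in \mathcal{I}$. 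This shows that $(E(G), \mathcal{I})$ is a matroid, which is $M(G)$ by definition.

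For part 2, recall that the bases of a matroid are its (equicardinal) maximal independent sets. A forest of $G$ is inclusion-maximal precisely when adding any further edge of $G$ creates a cycle, i.e. when every pair of adjacent vertices of $G$ is already joined in the forest; since $G$ is connected, this is equivalent to the forest being a connected spanning subgraph, i.e. a spanning tree. Conversely every spanning tree is clearly a maximal forest. Hence the bases of $M(G)$ are exactly the spanning trees of $G$ (all of size $n - 1$).

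For part 3, I would argue by induction on $d := |E(T) \setminus E(T')|$, the case $d = 0$ being trivial since then $T = T'$. For $d \ge 1$, pick an edge $e \in E(T) \setminus E(T')$. Deleting $e$ splits $T$ into two components with vertex sets $X$ and $V(G) \setminus X$, and $e$ is the \emph{only} edge of $T$ crossing this partition. As $T'$ is a connected spanning subgraph, it contains some edge $e'$ crossing the same partition; since $e \notin E(T')$ we have $e' \ne e$, and since no edge of $T$ other than $e$ crosses, $e' \notin E(T)$. Then $T_1 := (T - e) + e'$ reconnects the two parts and has $n - 1$ edges, so it is a spanning tree, obtained from $T$ by a single edge-replacement; moreover $e' \in E(T')$, so $|E(T_1) \setminus E(T')| = d - 1$. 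Applying the induction hypothesis to $T_1$ and $T'$ produces a chain of edge-replacements from $T_1$ to $T'$, and prepending the step $T \to T_1$ gives the required chain.

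I expect no serious obstacle, since all three parts are classical facts about the cycle matroid; the only points needing a little care are the component-counting argument behind the augmentation axiom in part 1 and the identification of the fundamental cut of $e$ in $T$ in part 3, both of which are routine.
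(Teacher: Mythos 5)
Your proof is correct. Note that the paper does not actually prove this lemma at all: it is quoted verbatim from Welsh's book on matroid theory (the reference \cite{welsh2010matroid}), so there is no in-paper argument to compare against. Your self-contained development is the standard one and is sound in all three parts: the component-count $n-k$ for a $k$-edge forest correctly drives the augmentation axiom, the identification of maximal forests with spanning trees uses connectivity of $G$ exactly where needed, and the induction on $|E(T)\setminus E(T')|$ in part 3 correctly exhibits an exchange edge $e'$ crossing the fundamental cut of $e$ with $e'\in E(T')\setminus E(T)$, which matches the paper's definition of edge-replacement (the new edge must not already lie in the current tree).
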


A graph $G$ on $n$ vertices  is called a {\em local amoeba} if, for any two copies $H$ and $H'$ of $G$ in $K_n$, there is a chain $H=G_0,G_1,\ldots,G_q=H'$ such that, for every $1 \le i \le q-1$, $G_i \cong G$ and $G_{i+1}$ is obtained from $G_i$ by an edge-replacement.   A graph $G$ is called a {\em global amoeba} if there exists an integer $n_0=n_0(G) \geq |V(G)|$, such that for all $n \geq n_0$ and any two copies $H$ and $H'$ of $G$ in $K_n$, there is a chain $H=G_0,G_1,\ldots,G_q=H'$ such that, for every $1 \le i \le q-1$, $G_i\cong G$ and $G_{i+1}$ is obtained from $G_i$ by an edge-replacement.  The notion of amoebas was introduced in \cite{caro2018unavoidable}, further developed in \cite{caro2020amoebas} and also used in \cite{caro2019small}. 

\smallskip




\smallskip

Now we can give examples of closed families:
\begin{enumerate}
\item A local amoeba on $n$ vertices is a closed family with a single element. Examples of such graphs are, to mention some, the path $P_n$, and $K_n - e$, the complete graph minus an edge, for $n \geq 4$.
\item For every $N \geq n$, a global amoeba on $n$ vertices forms a closed family in $K_N$ with a single element. Examples of such graphs are: $P_n$,  $nK_2$,  and the graph consisting of a cycle $C_k$ with a pending edge, to mention some.
\item The family of all connected graph on $n$ vertices and a fixed number of edges is a closed family in $K_n$.
\item The family of all graphs having a Hamiltonian path on $n$ vertices and a fixed number of edges is a closed family in $K_n$.
\end{enumerate}

\smallskip

\noindent \textbf{Covering family}\\
 A family $\mathcal{D}$ of graphs is called a \emph{covering family} of a closed family $\mathcal{F}$ if, for every $H  \in \mathcal{D}$ and every edge $e \in E(H)$, $H- e \in \mathcal{F}$.\\

For example, $\mathcal{D} = \{C_n\}$ is a covering family of the closed family $\mathcal{F} = \{P_n\}$. Also,  the family of all Hamiltonian graphs on $n$ vertices and $m + 1$ edges is a covering family of the closed family of all graphs on $n$ vertices and $m$ edges having a Hamiltonian path.\\

\smallskip

\noindent 
{\bf The family $\boldsymbol{\half(\mathcal{F})}$}\\
For a family $\mathcal{F}$ of graphs on $m$ edges, we define \[\half(\mathcal{F})  =  \left\{   H :  H \le F \mbox{ for some } F \in \mathcal{F},   e(H) = \left\lfloor\frac{m}{2} \right\rfloor, H \mbox{ has no isolates} \right\}.\]

\mbox{}

\noindent 
\textbf{Further basic notation}\\
Let $\mathcal{F}$ be a family of graphs on $m$ edges. A graph $G$ is said to have an \emph{$\mathcal{F}$-decomposition} if the edges of $G$ can be covered by an edge-disjoint union of copies of members of $\mathcal{F}$. The Tur\'an number of $\mathcal{F}$, denoted by $\ex( n,\mathcal{F} )$, is defined as the maximum integer $q$ such that there exists a graph $H$ with $|V(H)|=n$ and $|E(H)|=q$ and no member of $\mathcal{F}$ as a subgraph of $H$. If $\mathcal{F}$ consists of only one graph $F$, we write $\ex( n,F)$ instead of $\ex( n,\mathcal{F} )$.

\subsection{The interpolation lemma}

\begin{lemma}[Interpolation lemma for a closed family on a graph $G$] \label{interpolation}
Let $\mathcal{F}$ be a closed family on a graph $G$ such that its members have each $m$ edges, and let $f: E (G) \rightarrow  \{-1, 1\}$.  Suppose there are two members $H, H' \in \mathcal{F}$,  which are subgraphs of $G$, and assume that $f(H) \leq 0$ and $f(H') \geq 0$. Then there is a subgraph $Z \le G$, $Z \in \mathcal{F}$, which is zero-sum or almost zero-sum under $f$.
\end{lemma}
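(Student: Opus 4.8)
The plan is to reduce the statement to a discrete intermediate-value argument along an edge-replacement chain. First I would invoke the defining property of a closed family: since $H$ and $H'$ are subgraphs of $G$ belonging to $\mathcal{F}$, there is a chain $H = H_1, H_2, \ldots, H_q = H'$ of subgraphs of $G$, each one a member of $\mathcal{F}$ (equivalently, a subgraph of $G$ isomorphic to a member of $\mathcal{F}$) and hence each having exactly $m$ edges, such that $H_{i+1}$ is obtained from $H_i$ by an edge-replacement for every $1 \le i \le q-1$. Since every $H_i$ is a subgraph of $G$, the weight $f(H_i) = \sum_{e \in E(H_i)} f(e)$ is well defined, and this is the only place where I use that the chain stays inside $G$.

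Next I would track how the weight changes along the chain. If $E(H_{i+1}) = \left(E(H_i) \setminus \{e\}\right) \cup \{e'\}$, then $f(H_{i+1}) - f(H_i) = f(e') - f(e)$, which lies in $\{-2, 0, 2\}$ because $f$ takes only the values $-1$ and $1$. Thus $f(H_1), f(H_2), \ldots, f(H_q)$ is a sequence of integers in which consecutive terms differ by at most $2$ in absolute value. Moreover, since each $H_i$ has $m$ edges each coloured $\pm 1$, we have $f(H_i) \equiv m \pmod 2$ for all $i$; in particular all the $f(H_i)$ share the same parity.

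Then I would run the crossing argument. By hypothesis $f(H_1) = f(H) \le 0$ and $f(H_q) = f(H') \ge 0$. Let $j$ be the largest index with $f(H_j) \le 0$. If $f(H_j) = 0$, then $H_j$ is zero-sum (and $m$ is even), so $Z = H_j$ works. Otherwise $f(H_j) \le -1$, and then $j < q$ since $f(H_q) \ge 0$; by maximality of $j$ we get $f(H_{j+1}) \ge 1$, while the step bound gives $f(H_{j+1}) \le f(H_j) + 2 \le 1$, forcing $f(H_{j+1}) = 1$ (and hence $f(H_j) \ge -1$, so $f(H_j) = -1$). In this case $|f(H_j)| = |f(H_{j+1})| = 1$, so either of $H_j$, $H_{j+1}$ serves as the desired almost-zero-sum $Z$ (and $m$ is odd, consistent with the parity remark).

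I do not expect a genuine obstacle: the argument is essentially that of a lattice walk on $\mathbb{Z}$ with steps in $\{-2,0,2\}$ which starts in $(-\infty,0]$ and ends in $[0,\infty)$ and therefore must visit a point of absolute value at most $1$. The only two points that need care are (i) that the chain provided by closedness consists of subgraphs of $G$, so that $f$ is defined on each term — this is exactly what the definition guarantees — and (ii) the parity obstruction, which is precisely the reason one cannot in general land on weight $0$ and must allow the alternative $|f(Z)| = 1$.
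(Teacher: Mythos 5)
Your proposal is correct and follows essentially the same route as the paper's own proof: both walk along the edge-replacement chain guaranteed by closedness, observe that consecutive weights differ by $0$ or $\pm 2$ and all share the parity of $m$, and then apply a discrete intermediate-value argument to land on weight $0$ (when $m$ is even) or on a consecutive pair $-1, 1$ (when $m$ is odd). Your explicit choice of the last index $j$ with $f(H_j) \le 0$ just makes the crossing step slightly more formal than the paper's phrasing; no substantive difference.
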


\begin{proof}
Since $\mathcal{F}$ is closed in $G$, there is a chain of graphs $H  = H_0 , H_1,\ldots,H_q = H'$, where $H_i \le G$  and $H_i \in \mathcal{F}$ for all $1 \le i \le q$, such that, for  $1 \le i \le q-1$, $H_{i+1}$ is obtained from $H_i$ by an edge replacement. Since we remove one edge and insert a new one, we have clearly $|f(H_i) - f(H_{i+1})|  \in \{0, 2\}$, for each $1 \le i \le q-1$. Observe also that all values $f(H_i)$ are of the same parity as $m$. Hence, in the case $m \equiv 0 \pmod 2$,  on the way along the chain from $H$ to $H'$, there must be a $j \in \{1,2 \ldots, q\}$ such that $f(H_j) = 0$.  Similarly, in the case that $m \equiv 1 \pmod 2$  there must be a $j \in \{1,2 \ldots, q-1\}$ such that $f(H_j) = -1$  and $f(H_{j+1}) = 1$. Hence, we have proved in both cases that there is a graph  $Z \le G$, $Z \in \mathcal{F}$ with $|f(Z)| \le 1$, and we are done.
\end{proof}


 We are now ready to prove our main theorem of this  section.

\subsection{The master theorem}

The following theorem deals with three situations where some information is known on a closed family $\mathcal{F}$ in a graph $K_n$ of order $n$: 
\begin{enumerate}
\item{The case when we know the Tur\'an number $\ex(n, \half(\mathcal{F}))$ (or an upper bound on it if $\ex(n, \half(\mathcal{F}))$ is not known).}
\item{The case when $K_n$ has an $\mathcal{F}$-decomposition.}
\item{The case when $K_n$ has a $\mathcal{D}$-decomposition where $\mathcal{D}$ is a covering family of $\mathcal{F}$.}
\end{enumerate}

\begin{theorem}[Master theorem for zero-sum and almost-zero sum spanning subgraphs]\label{MasterThm}
Let $f:E(K_n) \rightarrow \{ -1 ,1\}$ be a colouring of the edges of $K_n$. Let $\mathcal{F}$ be a closed family of graphs on $m$ edges.  Let $c \in \{0,1\}$ be such that $m \equiv c \pmod 2$. Then each one of the following three conditions imply  the existence of a zero-sum or an almost zero-sum spanning graph of $K_n$ which is a member of $\mathcal{F}$.
\begin{enumerate}
\item If $\min\{ e(-1), e(1) \} > \ex(n, \half(\mathcal{F}) )$.
\item If $K_n$ has an $\mathcal{F}$-decomposition, and
\[|f(K_n)| < \frac{2+c}{m} {n \choose 2}.\] 
\item If $\mathcal{D}$ is a covering family of $\mathcal{F}$ such that $K_n$ has a $\mathcal{D}$-decomposition, and
\[|f(K_n)| < \frac{3+c}{m+1}{n \choose 2}.\] 
\end{enumerate}
Moreover, the condition in 1 is best possible.
\end{theorem}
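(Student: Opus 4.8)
The plan is to deduce all three implications (and the optimality of~1) from a single dichotomy about the closed family $\mathcal F$ in $K_n$, whose members I take to be spanning subgraphs of $K_n$ so that the conclusion makes sense. The dichotomy is: either there are two copies of members of $\mathcal F$ in $K_n$ whose weights have opposite signs --- in which case Lemma~\ref{interpolation} at once yields a zero-sum or almost-zero-sum spanning member of $\mathcal F$ and we are done --- or all copies of members of $\mathcal F$ in $K_n$ have weight of one fixed sign, say all nonnegative. In the second case, if moreover no copy is zero-sum or almost-zero-sum, then since every such weight has the parity of $m$ and ``almost-zero-sum'' being forbidden rules out the value $1$, each copy of a member of $\mathcal F$ in $K_n$ has weight $\geq 2$ when $m$ is even and weight $\geq 3$ when $m$ is odd, i.e.\ weight $\geq 2+c$. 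Each of the three parts is then closed by exhibiting enough copies of large weight to force $|f(K_n)|$ beyond the stated bound.

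For Part~1: since $e(-1)>\ex(n,\half(\mathcal F))$, the spanning subgraph of $K_n$ with edge set $E(-1)$ contains a copy of some $J\in\half(\mathcal F)$, say with $J\leq F$ for an appropriate $F\in\mathcal F$; because $K_n$ is complete and $F$ is spanning, this copy of $J$ extends to a copy of $F$ in $K_n$ (map the remaining vertices of $F$ bijectively onto the vertices of $K_n$ not used by $J$), and this copy of $F$ carries at least $\lfloor m/2\rfloor$ edges of colour $-1$, hence has weight at most $\lceil m/2\rceil-\lfloor m/2\rfloor=c$. Symmetrically, $e(1)>\ex(n,\half(\mathcal F))$ produces a spanning copy of a member of $\mathcal F$ of weight at least $-c$. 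If $m$ is even these two copies have weights $\leq 0$ and $\geq 0$, so Lemma~\ref{interpolation} finishes; if $m$ is odd, then either one of them already has weight $\pm1$, or one has weight $\leq -1$ and the other $\geq 1$, and Lemma~\ref{interpolation} together with the parity of $m$ produces an almost-zero-sum spanning member.

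For Parts~2 and~3, assume for contradiction that no zero-sum and no almost-zero-sum spanning member of $\mathcal F$ exists; by the dichotomy all copies of members of $\mathcal F$ in $K_n$ have weight of one fixed sign, say positive, hence weight $\geq 2+c$. For Part~2, decompose $K_n$ into $t=\binom{n}{2}/m$ edge-disjoint copies $F^{(1)},\dots,F^{(t)}$ of members of $\mathcal F$; then $|f(K_n)|=\sum_{i=1}^{t}f(F^{(i)})\geq (2+c)t=\tfrac{2+c}{m}\binom{n}{2}$, a contradiction. For Part~3, decompose $K_n$ into $s=\binom{n}{2}/(m+1)$ edge-disjoint spanning copies $D^{(1)},\dots,D^{(s)}$ of members of the covering family $\mathcal D$; each $D^{(i)}$ has $m+1$ edges and $D^{(i)}-e$ is a spanning copy of a member of $\mathcal F$ for every edge $e\in E(D^{(i)})$. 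No $D^{(i)}$ is monochromatic in colour $-1$ (otherwise $D^{(i)}-e$ would be a negative-weight copy of a member of $\mathcal F$), so $D^{(i)}$ has an edge $e$ of colour $1$ and $f(D^{(i)})=f(D^{(i)}-e)+1\geq 3+c$; summing, $|f(K_n)|\geq (3+c)s=\tfrac{3+c}{m+1}\binom{n}{2}$, a contradiction.

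For the optimality of Part~1, I would colour $K_n$ by taking $E(-1)$ to be the edge set of an extremal $\half(\mathcal F)$-free graph on $n$ vertices and $E(1)$ the rest; then $e(-1)=\ex(n,\half(\mathcal F))$, which is the smaller colour class in all cases of interest (when it is not, the hypothesis of Part~1 is vacuous), so $\min\{e(-1),e(1)\}=\ex(n,\half(\mathcal F))$ and the hypothesis fails only by equality. Any spanning copy $F'$ of a member of $\mathcal F$ has fewer than $\lfloor m/2\rfloor$ edges of colour $-1$, since otherwise $\lfloor m/2\rfloor$ of them would form (after deleting isolated vertices) a copy of a member of $\half(\mathcal F)$ inside the $(-1)$-graph; hence $f(F')\geq 2$ and $F'$ is neither zero-sum nor almost-zero-sum. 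I expect the main obstacle to be not a deep step but the careful bookkeeping in Part~1 --- verifying that a copy of a $\half(\mathcal F)$-member found inside a monochromatic subgraph really extends to a \emph{spanning} copy of a member of $\mathcal F$ in $K_n$ --- together with keeping the odd-$m$ parity branches straight throughout; the remainder is the uniform ``Interpolation Lemma, or average over a decomposition'' scheme.
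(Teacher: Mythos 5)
Your proposal is correct and follows essentially the same route as the paper: extract oppositely-signed copies via $\half(\mathcal{F})$ (Part 1) or via averaging over an $\mathcal{F}$- or $\mathcal{D}$-decomposition (Parts 2 and 3), then invoke the Interpolation Lemma, with the same extremal colouring for sharpness. Your only deviation is organizational --- stating the sign dichotomy up front and arguing Parts 2 and 3 by contradiction, which lets you replace the paper's case analysis on $f(H_{i_1})$ and $f(H_{i_2})$ in Part 3 by a two-line averaging bound --- but the ingredients and bounds are identical.
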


\begin{proof}
\mbox{}\\
1. Since $\min \{ e( -1) ,e(1) \} > \ex(n, \half(\mathcal{F} ) )$, we infer that there is a subgraph $H^{-}$ on $\lfloor\frac{ m}{2} \rfloor$ edges, all of them coloured $-1$, and such that $H^{-}$ is a subgraph of some graph $F^{-}$ isomorphic to some member in $\mathcal{F}$. Analogously, there is a subgraph $H^{+}$ on $\lfloor\frac{ m}{2} \rfloor$ edges, all of them coloured $1$, and such that $H^{+}$ is a subgraph of some graph $F^{+}$ isomorphic to some member in $\mathcal{F}$ (it is possible that $F^-  = F^+$).  Then we have
\[f(F^-) \le \left\lceil\frac{ m}{2} \right\rceil - \left\lfloor\frac{ m}{2} \right\rfloor, \mbox{ and } f(F^+) \ge \left\lfloor\frac{ m}{2} \right\rfloor - \left\lceil\frac{ m}{2} \right\rceil. \] 
When $m \equiv 0 \pmod 2$, then we have $f(F^-) \le 0$ and $f(F^+) \ge 0$, while in the case that $m \equiv 1 \pmod 2$, it follows that $f(F^-) \le 1$ and $f(F^+) \ge -1$. If we have in the latter case that $f(F^-) = 1$ or $f(F^+) = -1$, we are done. So we may assume in both cases that $f(F^-) \le 0$ and $f(F^+) \ge 0$ and thus, by the Interpolation Lemma (Lemma \ref{interpolation}), we conclude that there is a zero-sum or an almost zero-sum spanning graph of $K_n$ which is a member of $\mathcal{F}$.

\smallskip

We will show here also that the bound  $\min\{  e( -1)  , e( 1) \} >  \ex(n,\half(\mathcal{F}) )$ is best possible. To this purpose, we take a colouring  $f:E(K_n) \rightarrow \{ -1 ,1\}$ with $e(-1)  = \ex(n,\half(\mathcal{F}))$ such that $K_n$ does not contain any subgraph $H$ isomorphic to any member of $\half(\mathcal{F})$ with all its edges coloured $-1$. Then $K_n$ can neither contain any subgraph isomorphic to any member of $\mathcal{F}$ with $\left\lfloor\frac{m}{2} \right\rfloor$ edges coloured $-1$, implying that there is no zero-sum or almost zero-sum copy of any member of $\mathcal{F}$.\\

\noindent
2. Since  $K_n$ has  an $\mathcal{F}$-decomposition, it follows that $K_n$  is the union of  $t = \frac{1}{m} {n \choose 2}$ edge-disjoint spanning subgraphs $H_1, H_2, \ldots, H_t$ which are members of $\mathcal{F}$. Recall that $c \in \{0,1\}$ is such that $m \equiv c \pmod 2$. If $f(H_i) \ge 2 + c$ for all $1 \le i \le t$,  or $f(H_i) \le -2 - c$ for all $1 \le i \le t$, then
\[|f(K_n)| = \left|\sum_{i=1}^t f(H_i) \right| \ge t (2+c) = \frac{2+c}{m} {n \choose 2},\]
contradicting the hypothesis. Hence, there are indexes $i_1, i_2$ such that $f(H_{i_1}) \le 1+ c$ and $f(H_{i_2}) \ge -1- c$. If $c = 0$,  since $f(H_i) \equiv 0 \pmod 2$ for all $i$, it follows that  $f(H_{i_1}) \le 0$ and $f(H_{i_2}) \ge 0$. On the other hand, if $c = 1$, we have $f(H_{i_1}) \le 1$ and $f(H_{i_2}) \ge -1$. If, in this case, we have that $f(H_{i_1}) = 1$ or $f(H_{i_2}) = -1$, then we are done. Thus, we can assume that in both cases  $f(H_{i_1}) \le 0$ and $f(H_{i_2}) \ge 0$ holds. Hence, by the Interpolation Lemma (Lemma \ref{interpolation}), it follows that there is a spanning subgraph $H$ which is isomorphic to some member of $\mathcal{F}$ and which is zero-sum or almost zero-sum under $f$.
\medskip

\noindent
3. Since $K_n$ has a $\mathcal{D}$-decomposition, it that follows $E(K_n)$  is the union of  $t = \frac{1}{(m+1)}{n \choose 2}$  edge-disjoint spanning subgraphs $H_1, H_2, \ldots, H_t$ which are members of $\mathcal{D}$. Recall that $c \in \{0,1\}$ is such that $m \equiv c \pmod 2$. If $f(H_i) \ge 3 + c$ for all $1 \le i \le t$,  or $f(H_i) \le -3 - c$ for all $1 \le i \le t$, then
\[|f(K_n)| = \left|\sum_{i=1}^t f(H_i) \right| \ge t (3+c) = \frac{3+c}{m+1} {n \choose 2},\]
contradicting the hypothesis. Hence, there are indexes $i_1, i_2$ such that $f(H_{i_1}) \le 2+ c$ and $f(H_{i_2}) \ge -2- c$. Since $f(H_i) \equiv 1+m \equiv 1+c  \pmod 2$ for all $1 \le i \le t$, we deduce that, actually, 
\[f(H_{i_1}) \le 1+ c \mbox{ and } f(H_{i_2}) \ge -1- c\] 
hold. If $f(H_{i_1}) = 1+c$, then there has to be an edge $e_1 \in E(H_{i_1})$ such that $f(e_1) = 1$. Then $H_{i_1} - e_1$ is isomorphic to some member in $\mathcal{F}$ and it has $f(H_{i_1} - e_1) = f(H_{i_1}) - 1 = c$, so in this case we are done. Similarly, if $f(H_{i_2}) = -1-c$, then there has to be an edge $e_2 \in E(H_{i_2})$ such that $f(e_2) = -1$, implying that $H_{i_2} - e_2$, which is isomorphic to some member in $\mathcal{F}$, has $f(H_{i_2} - e_2) = f(H_{i_2}) + 1 = -c$ and we have finished. Hence, we can assume that 
\[ f(H_{i_1}) \le c-1 \mbox{ and } f(H_{i_2}) \ge -c+1.\] 
From this we infer the existence of edges $e'_1 \in E(H_{i_1})$ and $e'_2 \in E(H_{i_2})$  such that $f(e'_1) = -1$ and $f(e'_2) = 1$. Thus, we have that the graphs $H_{i_1} - e'_1$ and $H_{i_2} - e'_2$ are isomorphic to some members of $\mathcal{F}$ such that
\[ f(H_{i_1} - e'_1) \le c \mbox{ and } f(H_{i_2} - e'_2) \ge -c.\] 
If, in the case that $c = 1$, we have that $ f(H_{i_1} - e'_1) = 1$ or that $f(H_{i_2} - e'_2) = -1$, then we have finished. So we may assume in both cases that 
\[ f(H_{i_1} - e'_1) \le 0 \mbox{ and } f(H_{i_2} - e'_2) \ge 0.\] 
Hence, by the Interpolation Lemma (Lemma \ref{interpolation}), we obtain the existence of a zero-sum or almost zero-sum spanning subgraph $H$ which is isomorphic to some graph in $\mathcal{F}$.
\end{proof}

Observe that the condition $\min\{ e(-1), e(1) \} > \ex(n, \half(\mathcal{F}) )$ in item 1 of Theorem \ref{MasterThm} can only be satisfied if ${n \choose 2} \ge 2(\ex(n, \half(\mathcal{F}) ) + 1)$. However, by the well known observation of Erd\H{o}s that every graph $H$ contains a bipartite subgraph $H'$ with $e(H')  = \lfloor \frac{e(H)}{2} \rfloor$, $\ex( n,\half(\mathcal{F}) )  =  o(n^2)$  always holds.  Also  because of this fact, Theorem \ref{MasterThm}  could also be stated, more generally, for connected dense graphs and closed families in those connected dense graphs, for example spanning trees.  However, we will not use it in its full generality here, but a  demonstration of this  approach is given in Section 5, where also the structure of the family of dense graphs that is studied is taken heavily into account.


\section{Applications of the master theorem}
In this section, we shall demonstrate applications of the three parts of the Master Theorem concerning zero-sum spanning graphs of $K_n$. 
\subsection{Spanning paths}

 The first  application is a theorem in which we determine exactly the minimum amount of edges in each colour required in order to force a zero-sum or an almost zero-sum spanning path.  This is done using $\half(P_n)$ and a recent deep theorem of Ning and Wang \cite{NING2020111924} on Tur\'an numbers for linear forests.      

The next results are two examples of application of the Master theorem for path-decompositions and cycle-decompositions of $K_n$, which give weaker bounds than the theorem but are tailor made to demonstrate this technique.   

We need a few more definitions and results. Let $\mathcal{L}(t,k)$ denote the family of all linear forests on $t$ vertices and $k$ edges.  

\begin{theorem}[\cite{NING2020111924}] \label{thmturan}
Let $k$ and $n$ be positive integers such that $k \le n-1$. Then 
\[\ex(n, \mathcal{L}(n,k) )  = \max \left\{ \binom{k}{2},  \binom{n}{2}  -  \binom{n- \lfloor \frac{k-1}{2} \rfloor}{2} + c \right\},\] 
where $c \in \{0,1\}$ is such that $k-1 \equiv c \pmod 2$. 
\end{theorem}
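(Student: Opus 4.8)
The plan is to prove the matching lower and upper bounds for $\ex(n,\mathcal{L}(n,k))$ separately. First observe that, since a member of $\mathcal{L}(n,k)$ is a spanning linear forest of an $n$-vertex graph and isolated vertices are immaterial, ``$G$ contains no copy of a member of $\mathcal{L}(n,k)$'' just means that every linear forest in $G$ has at most $k-1$ edges. For the lower bound I would exhibit the two extremal graphs, one for each term of the maximum. The first is $K_k\cup\overline{K_{n-k}}$: a longest path of $K_k$ has only $k-1$ edges, so this graph has $\binom k2$ edges and no linear forest with $k$ edges. The second, call it $G_s$, is obtained by completely joining a clique on a set $S$ of $s:=\lfloor\frac{k-1}{2}\rfloor$ vertices to an independent set on the remaining $n-s$ vertices, together with one further edge inside the independent part when $c=1$. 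If $F$ is a linear forest in $G_s$ and $a,b$ are the numbers of edges of $F$ with, respectively, two and exactly one endpoint in $S$, then summing $\deg_F$ over $S$ gives $2a+b\le 2s$, while the number of edges of $F$ avoiding $S$ is at most $c$; hence $e(F)\le a+b+c\le 2s+c=k-1$. Since $e(G_s)=\binom n2-\binom{n-s}{2}+c$, the lower bound follows.

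For the upper bound, let $G$ be an $n$-vertex graph in which every linear forest has at most $k-1$ edges; I would bound $e(G)$ by induction on $n$ (with $k$ fixed), splitting on whether $\delta(G)\le s$ or $\delta(G)\ge s+1$. In the case $\delta(G)\ge s+1=\lceil k/2\rceil$ I would invoke a min-degree lemma: a graph on $n\ge k+1$ vertices with minimum degree at least $\lceil k/2\rceil$ contains a linear forest with at least $k$ edges. One proves this by taking a longest path in each connected component (a longest path of a connected graph with minimum degree $d$ has at least $\min\{2d,|C|-1\}$ edges) and summing: if $G$ is connected, a single such path already has at least $k$ edges, and if $G$ is disconnected then every component has more than $\lceil k/2\rceil$ vertices, which forces $n$ large enough that the path lengths add up to at least $k$. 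Since this contradicts the hypothesis, in fact $\delta(G)\le s$ always holds; deleting a vertex $v$ of degree at most $s$ and applying induction to $G-v$ gives $e(G)=e(G-v)+\deg(v)$, and since $\binom n2-\binom{n-s}{2}$ exceeds $\binom{n-1}{2}-\binom{n-1-s}{2}$ by exactly $s$, this closes the induction in the regime where the second term of the maximum is the operative one.

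The main obstacle is precisely the regime of small $n$ (with $n$ not much larger than $k$) in which the first term $\binom k2$ dominates: there the deletion step does not by itself reproduce the bound, and one must argue directly that a graph on so few vertices with no $k$-edge linear forest has at most $\binom k2$ edges. The natural tool is a structural analysis of a \emph{maximum} linear forest $F$ of $G$, exploiting that no edge of $G$ joins two $F$-isolated vertices, nor an $F$-isolated vertex to an endpoint of a path of $F$, and that no such vertex is adjacent to two consecutive interior vertices of a path of $F$; together with finer rotation/exchange arguments, these restrictions push almost all of $E(G)$ onto a set of about $k$ vertices. Converting this into the exact constant $\binom k2$, pinning down the threshold on $n$ where the two regimes meet, and checking the small base cases $k\le 2$ is the delicate bookkeeping of \cite{NING2020111924}, which we use here as a black box.
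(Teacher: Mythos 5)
This theorem is not proved in the paper at all: it is imported verbatim from Ning--Wang \cite{NING2020111924} and used as a black box, so there is no in-paper proof to compare your attempt against. Judged on its own terms, your lower bound is correct and complete: both extremal constructions are the right ones, the count $e(G_s)=\binom{s}{2}+s(n-s)+c=\binom{n}{2}-\binom{n-s}{2}+c$ is right, and the degree-sum argument $2a+b\le 2|S|$ together with the parity bookkeeping $2\lfloor\frac{k-1}{2}\rfloor+c=k-1$ correctly shows that $G_s$ has no linear forest with $k$ edges.

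The upper bound, however, is where the actual content of the theorem lives, and your sketch has a genuine gap there --- one you candidly flag yourself. The minimum-degree dichotomy and the vertex-deletion induction do reproduce the second term of the maximum (the increment $\binom{n}{2}-\binom{n-s}{2}-\bigl(\binom{n-1}{2}-\binom{n-1-s}{2}\bigr)=s$ is exactly what the deletion step costs), but this argument cannot close in the regime where $\binom{k}{2}$ is the operative term: if $e(G-v)\le\binom{k}{2}$ is all the induction gives you, then $e(G)\le\binom{k}{2}+s$ overshoots the claimed bound, and the structural analysis of a maximum linear forest that you gesture at (no edges among $F$-isolated vertices, no edges to path-endpoints, rotation/exchange arguments) is precisely the delicate part that you defer to the reference. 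So what you have is a verification of sharpness plus the easy half of the upper bound; the hard half remains a citation, which, to be fair, is exactly how the paper itself treats the entire theorem.
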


In Figure \ref{diag1} below, the extremal graphs for $\ex( n, \mathcal{L}(n, k))$ given in \cite{NING2020111924} are depicted.
 
 \begin{figure}[h] 
\centering
\includegraphics{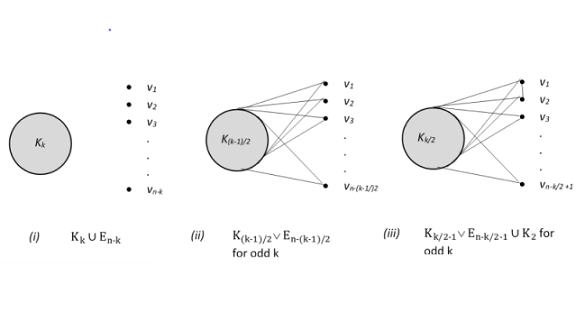} 
\caption{Extremal graphs for $\ex( n, \mathcal{L}(n, k))$ } \label{diag1}
\end{figure}

Theorem \ref{thmturan} is the key for calculating the exact Tur\'an number $\ex( n,  \half( P_ n ) )$, which we will need in order to be able to apply item 1 of the Master Theorem. Observe that Theorem \ref{thmturan} can also be stated for the family $\mathcal{L}_k$ of all linear forests with exactly $k$ edges and no isolates (recall $k \le n-1$), because we have clearly that $\ex(n, \mathcal{L}(n,k) ) = \ex(n,\mathcal{L}_k )$. 
\begin{theorem} \label{thmspaths}
Let $n \geq 3$, let $c \in \{0,1\}$ be such that $\lfloor \frac{n-1}{2} \rfloor - 1 \equiv c \pmod 2$, and let $f : E(K_n)  \rightarrow \{ -1 ,1\}$ be a colouring fulfilling  
\[\min \{ e( -1) ,e(1) \} >  \binom{n}{2}  -   \binom{n - \lfloor \frac{ n-3}{4} \rfloor}{2} + c.\] 
Then  there is a zero-sum or an almost zero-sum spanning path.  Moreover, the bound is sharp.
\end{theorem}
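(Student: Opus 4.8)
The plan is to apply item 1 of the Master Theorem (Theorem \ref{MasterThm}) to the closed family $\mathcal{F} = \{P_n\}$, which is indeed a closed family (a Hamiltonian path is a global amoeba, as noted after Lemma \ref{matroids}), with $m = e(P_n) = n-1$ edges. The parity constant of the Master Theorem is $c \in \{0,1\}$ with $n - 1 \equiv c \pmod 2$; note however that item 1 produces a zero-sum graph when $m$ is even and an almost-zero-sum graph when $m$ is odd, so the relevant parity is that of $\lfloor (n-1)/2 \rfloor$, which is why the statement phrases $c$ in terms of $\lfloor \frac{n-1}{2} \rfloor - 1$. What item 1 requires is the hypothesis $\min\{e(-1), e(1)\} > \ex(n, \half(P_n))$, so the whole theorem reduces to the claim
\[
\ex\!\left(n, \half(P_n)\right) \;=\; \binom{n}{2} - \binom{n - \lfloor \frac{n-3}{4}\rfloor}{2} + c.
\]

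First I would identify $\half(P_n)$ explicitly. By definition it is the set of subgraphs $H$ of $P_n$ with $e(H) = \lfloor \frac{n-1}{2}\rfloor$ and no isolated vertices. Every subgraph of a path with no isolated vertices is a linear forest, and conversely every linear forest on at most $n$ vertices with $\lfloor\frac{n-1}{2}\rfloor$ edges embeds into $P_n$ as a subgraph; hence $\half(P_n)$ is precisely the family $\mathcal{L}_k$ of all linear forests with exactly $k := \lfloor\frac{n-1}{2}\rfloor$ edges and no isolates (one must check $k \le n-1$, which is immediate). By the remark following Theorem \ref{thmturan}, $\ex(n, \mathcal{L}_k) = \ex(n, \mathcal{L}(n,k))$, so I may invoke the Ning–Wang formula directly with this value of $k$.

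Next I would plug $k = \lfloor\frac{n-1}{2}\rfloor$ into Theorem \ref{thmturan}, which gives
\[
\ex(n, \mathcal{L}(n,k)) = \max\left\{ \binom{k}{2}, \; \binom{n}{2} - \binom{n - \lfloor\frac{k-1}{2}\rfloor}{2} + c'\right\},
\]
where $c' \equiv k - 1 \pmod 2$. Here $\lfloor \frac{k-1}{2}\rfloor = \lfloor\frac{\lfloor(n-1)/2\rfloor - 1}{2}\rfloor$, and a short case analysis on $n \bmod 4$ shows this equals $\lfloor\frac{n-3}{4}\rfloor$, and that $c' = c$ as defined in the statement; this matches the second term in the asserted formula. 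It then remains to check that the second term dominates the first, i.e. that $\binom{n}{2} - \binom{n - \lfloor\frac{n-3}{4}\rfloor}{2} + c \ge \binom{k}{2}$ for all $n \ge 3$ (with possibly a few small values verified by hand). The left side is the number of edges in a clique on roughly $n/4$ vertices joined completely to the rest, which for $k \approx n/2$ comfortably exceeds $\binom{k}{2} \approx n^2/8$; making this rigorous is a routine but slightly fiddly inequality comparison. For sharpness, I would take the extremal graph realizing $\ex(n, \half(P_n))$ (the second extremal configuration in Figure \ref{diag1}), colour its edges $-1$ and all remaining edges of $K_n$ with $1$: then no $\lfloor\frac{n-1}{2}\rfloor$ edges coloured $-1$ form a subgraph of any Hamiltonian path, so, exactly as in the sharpness argument inside the proof of Theorem \ref{MasterThm}, there is no zero-sum or almost-zero-sum spanning path, and $\min\{e(-1),e(1)\}$ equals the bound.

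The main obstacle I anticipate is purely bookkeeping: correctly tracking the two nested floor functions and the parity constant $c$ through the substitution $k = \lfloor\frac{n-1}{2}\rfloor$, and confirming in each residue class of $n$ modulo $4$ that $\lfloor\frac{k-1}{2}\rfloor = \lfloor\frac{n-3}{4}\rfloor$ and that the Ning–Wang maximum is attained by the second term rather than $\binom{k}{2}$. There is no conceptual difficulty beyond this — all the heavy lifting is done by Theorem \ref{thmturan} and by item 1 of the Master Theorem together with its built-in sharpness construction.
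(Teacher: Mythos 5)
Your proposal is correct and follows essentially the same route as the paper: identify the spanning paths of $K_n$ as a closed family, observe that $\half(P_n)=\mathcal{L}_{\lfloor (n-1)/2\rfloor}$, evaluate $\ex(n,\half(P_n))$ via the Ning--Wang theorem (Theorem \ref{thmturan}), and invoke item 1 of the Master Theorem together with its built-in sharpness construction. The only difference is that you are slightly more explicit than the paper about verifying the nested-floor identity $\lfloor\frac{k-1}{2}\rfloor=\lfloor\frac{n-3}{4}\rfloor$ and that the maximum in the Ning--Wang formula is attained by the second term, both of which the paper treats as straightforward.
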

\begin{proof}
As mentioned before, the family of all spanning paths of $K_n$ is a closed family. Moreover, it is straightforward to see that $\half(P_ n) = \mathcal{L}_{\left\lfloor \frac{n-1}{2} \right\rfloor}$. Hence, by Theorem \ref{thmturan}, we have 
\[\ex( n,  \half( P_ n ) ) =  \ex \left( t , \mathcal{L}_{\left\lfloor \frac{n-1}{2} \right\rfloor} \right)  = \binom{n}{2}  -   \binom{n - \lfloor \frac{ n-3}{4} \rfloor}{2} + c.\]
Together with the simple inequalities $\lfloor \frac{n-3}{4}\rfloor \le \frac{n-3}{4}$ and $c \le 1$, it is straightforward to check that $2(\ex( n,  \half( P_ n ) )+1) \le \binom{n}{2}$ for $n \ge 4$. For $n = 3$, the same inequality can be checked separately.
Hence, together with the hypothesis that $\min \{ e( -1) ,e(1) \} >  \binom{n}{2}  -   \binom{n - \lfloor \frac{ n-3}{4} \rfloor}{2} + c$, it follows by item 1 of Theorem \ref{MasterThm} that there is a zero-sum or an almost zero-sum spanning path and that the bound is sharp.
 \end{proof}
The sharpness of this theorem is a consequence of the Master Theorem coming from the extremal examples of $\mathcal{L}\left(n, \lfloor \frac{n-1}{2} \rfloor \right)$-free graphs on $n$ vertices, which were given in \cite{NING2020111924} (see Figure \ref{diag1}). Thus, a colouring $f: E(K_n) \to \{-1,1\}$ where the $(-1)$-edges (or, equivalently, the $1$-edges) induce one of these extremal graphs is a colouring in which the number of edges in one of the colours has one unit less than what is allowed in Theorem \ref{thmspaths} but where no zero-sum or almost zero-sum spanning path can be found.
%

 
Now we give an example of application of the Master Theorem in the case of decomposition into a closed family of Hamiltonian paths.

 \begin{example} \label{example1}
Let $n \equiv 0 \pmod 2$ and let $f : E(K_n) \rightarrow \{ -1 ,1\}$  such that  $|f(K_n)| < \frac{3n}{2}$.  Then  there is a spanning path $Z$ with $|f(Z)| = 1$.   
\end{example}
 
\begin{proof}
Since  $n \equiv 0 \pmod 2$, $K_n$ can be decomposed into $\frac{n}{2}$ Hamilton paths (see \cite{Harary1969}), hence into $\frac{n}{2}$  paths on $n-1$ edges. Then, with $n -1 \equiv 1 = c \pmod 2$, and the hypothesis that \[|f(K_n)| < \frac{3n}{2} = \frac{2+c}{n-1} \binom{n}{2},\]
item 2 of Theorem \ref{MasterThm} yields that there is an almost spanning path.   



\end{proof}

Now we give an example of using the Master theorem in the case of decomposition into Hamiltonian cycles which is the covering family of the closed family of Hamiltonian paths. 

\begin{example}
Let $n \equiv 1 \pmod 2$ and let $f : E(K_n)  \rightarrow \{ -1 ,1\}$  such that  $|f(K_n)| < \frac{3(n-1)}{2}$.  Then  there is a zero-sum spanning path.  
\end{example}

\begin{proof}
Since  $n \equiv 1 \pmod 2$, $K_n$ can be decomposed into $\frac{n-1}{2}$ Hamiltonian cycles (see \cite{Harary1969}), hence into $\frac{n-1}{2}$  cycles on $n$ edges. Then, with $n -1 \equiv 0 = c \pmod 2$, and the hypothesis that \[|f(K_n)| < \frac{3(n-1)}{2} = \frac{3+c}{n} \binom{n}{2},\]
item 3 of Theorem \ref{MasterThm} yields that there is a zero-sum spanning path.





 \end{proof}

\subsection{Spanning trees}
Let $\mathcal{F}_k$ be the family of forests on $k$ edges without isolated vertices.
\begin{lemma} \label{lemma1}
For integers $n, k$ such that $n \ge k$, ${\rm ex}(n, \mathcal{F}_k) = \binom{k}{2}$.
\end{lemma}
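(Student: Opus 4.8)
The plan is to prove $\ex(n,\mathcal{F}_k)=\binom{k}{2}$ by establishing the two inequalities separately. For the lower bound, I would exhibit a graph on $n$ vertices with $\binom{k}{2}$ edges containing no forest on $k$ edges without isolates: take $K_{k-1}$ together with $n-k+1$ isolated vertices. Any subgraph of $K_{k-1}$ spanning a forest has at most $k-2$ edges, since a forest on at most $k-1$ vertices has at most $k-2$ edges; hence no member of $\mathcal{F}_k$ (which needs $k$ edges and therefore at least $k+1$ vertices once we forbid isolates, or in any case at least $k+1$ vertices in its own vertex set) can sit inside this graph. This shows $\ex(n,\mathcal{F}_k)\ge\binom{k}{2}$.

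For the upper bound, I would argue that any graph $H$ on $n$ vertices with $e(H)\ge\binom{k}{2}+1$ contains a forest with exactly $k$ edges and no isolated vertices. The natural approach is greedy/inductive on the number of edges: repeatedly discard a vertex of minimum degree. If at every stage of peeling off minimum-degree vertices the minimum degree stays small, the edge count is controlled; more precisely, I would use the standard fact that a graph with $m$ edges has a spanning forest (across its components) with $m - (\text{number of independent cycles})$ edges, so it suffices to find $k$ edges whose induced subgraph is acyclic. A cleaner route: since $e(H) > \binom{k-1}{2}$... actually $\binom{k}{2}$, I want to show $H$ has a connected subgraph, or union of components, with at least $k$ edges spanning at least $k+1$ vertices that can be pruned to a forest with exactly $k$ edges. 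I would take a maximal forest $T$ in $H$; it has $n - c$ edges where $c$ is the number of components of $H$. If $n-c \ge k$ we can select a sub-forest of $T$ with exactly $k$ edges (remove leaves one at a time, which preserves "no isolates" as long as we keep at least one edge, i.e. $k\ge 1$), and we are done. If $n - c < k$, then $H$ has many components and few edges; each component on $n_i$ vertices has at most $\binom{n_i}{2}$ edges, and with $\sum n_i = n$ and $\sum(n_i - 1) = n-c \le k-1$ one bounds $\sum\binom{n_i}{2}$ by the case where one component is as large as possible, namely $\binom{k}{2}$, contradicting $e(H)\ge\binom{k}{2}+1$. This convexity step — showing $\sum\binom{n_i}{2}\le\binom{k}{2}$ given $\sum(n_i-1)\le k-1$ — is the technical heart.

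The main obstacle I anticipate is making the convexity/extremal argument in the last case fully rigorous: one must check that among all ways of partitioning a "budget" of $k-1$ on the quantities $n_i - 1$ (with the $n_i\ge 1$), the sum $\sum\binom{n_i}{2}$ is maximized by putting all the weight into a single block of size $k$, and that the remaining $n-k$ vertices contribute nothing. This is a routine smoothing argument (merging two nontrivial components strictly increases $\sum\binom{n_i}{2}$ since $\binom{a}{2}+\binom{b}{2}<\binom{a+b-1}{2}$ for $a,b\ge 2$... one should double-check the exact identity with the shifted indices), but it deserves to be stated carefully. Everything else is bookkeeping: the pruning of a forest to exactly $k$ edges while avoiding isolated vertices is immediate because deleting a leaf edge removes at most one vertex from the support and never creates an isolated vertex among the remaining positive-degree vertices, as long as we stop at $k\ge 1$ edges.
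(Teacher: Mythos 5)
Your upper-bound argument is correct but follows a genuinely different route from the paper. The paper proves by induction on $k$ that any $n$-vertex graph with more than $\binom{k}{2}$ edges contains a forest on $k$ edges: either some vertex has degree at least $k$, giving a star $K_{1,k}$, or the maximum degree is at most $k-1$, so deleting one vertex of positive degree leaves more than $\binom{k-1}{2}$ edges; the induction hypothesis gives a forest on $k-1$ edges which is then extended by one pendant edge back to the deleted vertex. Your route is global rather than inductive: a maximal spanning forest has $n-c$ edges ($c$ the number of components), and either $n-c\ge k$ and you prune leaf edges down to exactly $k$ (which indeed never creates isolates in the support), or $\sum (n_i-1)=n-c\le k-1$ and then $e(H)\le\sum\binom{n_i}{2}\le\binom{k}{2}$, a contradiction. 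The convexity step you flag as the technical heart is actually immediate: writing $m_i=n_i-1\ge 0$, the constraint $\sum m_i\le k-1$ gives $m_i+1\le k$ for every $i$, hence $\sum m_i(m_i+1)\le k\sum m_i\le k(k-1)$, which is exactly $\sum\binom{n_i}{2}\le\binom{k}{2}$; no smoothing is needed (though your merging inequality is also valid, since $\binom{a+b-1}{2}-\binom{a}{2}-\binom{b}{2}=(a-1)(b-1)\ge 0$). Your version makes the extremal structure (one clique component plus isolated vertices) transparent; the paper's inductive template has the advantage that it is reused almost verbatim for the $K_3$-free, $d$-degenerate and planar analogues later in the paper.

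There is one genuine slip, in the lower bound: $K_{k-1}$ together with $n-k+1$ isolated vertices has only $\binom{k-1}{2}$ edges, so it establishes only $\ex(n,\mathcal{F}_k)\ge\binom{k-1}{2}$, not the claimed $\binom{k}{2}$. The correct witness is $K_k\cup\overline{K_{n-k}}$ (this is where the hypothesis $n\ge k$ enters): it has $\binom{k}{2}$ edges and only $k$ vertices of positive degree, whereas any forest with $k$ edges and no isolates has at least $k+1$ vertices --- precisely the counting you already state in parentheses. The fix is a one-symbol change, but as written the lower bound is not proved.
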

 
\begin{proof}
We prove this by induction on $k$.  For $k = 1,2$  it is true and we assume this is true for $k$, so let us prove it for  $k+1$. 

Let $G$ be a graph of order $n \ge k+1$ and with $e(G) > \binom{k +1}{2}$.  If $\Delta(G)  \geq k +1$ we are done as there is a star on at least $k+1$  edges. So we assume that $\Delta(G) \leq k$.  Let $v$ be a vertex with $\deg(v)  \geq 1$.   Delete $v$ to get $G^* = G - v$.  Clearly, $e(G^*) > \binom{k+1}{2}  -  k \geq \binom{k}{2}$. Hence, by the induction hypothesis, $G^*$ contains a forest $F^*$  on $k$ edges. Adding the vertex $v$ and a single edge incident with $v$ to $F^*$, we get a forest $F$ with $e(F) =  k+1$.

The graph $K_k \cup \overline{K_{n-k}}$ shows that the bound is sharp.
\end{proof}

\begin{theorem}
Let  $f : E(K_n)  \rightarrow \{ -1 ,1\}$ be a colouring with  $\min \{ e(-1), e(1)  \} > \binom{\lfloor \frac{n-1}{2} \rfloor}{2}$. Then $K_n$ contains a spanning zero-sum or an almost zero-sum tree, and this result is sharp.
 \end{theorem}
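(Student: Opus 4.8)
The plan is to apply item 1 of the Master Theorem (Theorem~\ref{MasterThm}) with $\mathcal{F}$ taken to be the closed family of spanning trees of $K_n$, which is closed by Lemma~\ref{matroids}(3). Each spanning tree has $m = n-1$ edges, so $\left\lfloor\frac{m}{2}\right\rfloor = \left\lfloor\frac{n-1}{2}\right\rfloor$. The key identification to make is that $\half(\mathcal{F}) = \mathcal{F}_{\lfloor (n-1)/2\rfloor}$, the family of all forests on $\left\lfloor\frac{n-1}{2}\right\rfloor$ edges with no isolated vertices: indeed, any such forest has at most $2\left\lfloor\frac{n-1}{2}\right\rfloor \le n$ vertices, hence embeds as a subforest of some spanning tree of $K_n$, and conversely every subgraph on $\left\lfloor\frac{n-1}{2}\right\rfloor$ edges of a spanning tree is a forest with no isolates. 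By Lemma~\ref{lemma1}, $\ex(n, \half(\mathcal{F})) = \ex\!\left(n, \mathcal{F}_{\lfloor (n-1)/2\rfloor}\right) = \binom{\lfloor (n-1)/2\rfloor}{2}$, which matches the threshold in the hypothesis.

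Then I would verify the side condition noted after the Master Theorem, namely that $\binom{n}{2} \ge 2\left(\binom{\lfloor (n-1)/2\rfloor}{2} + 1\right)$, so that the hypothesis $\min\{e(-1),e(1)\} > \binom{\lfloor (n-1)/2\rfloor}{2}$ is actually realizable; this is a routine inequality check (for small $n$ one checks directly, and for larger $n$ it follows since $\binom{\lfloor (n-1)/2\rfloor}{2}$ is roughly $\frac{1}{4}\binom{n}{2}$). With that in hand, item 1 of Theorem~\ref{MasterThm} applied to this $\mathcal{F}$ immediately gives a zero-sum or almost zero-sum spanning graph of $K_n$ that is a member of $\mathcal{F}$ — that is, a spanning tree.

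For sharpness, I would exhibit a colouring meeting the bound with equality minus one and with no zero-sum or almost zero-sum spanning tree. Following the sharpness part of Theorem~\ref{MasterThm}, take $f$ so that the $(-1)$-edges induce the extremal graph $K_k \cup \overline{K_{n-k}}$ from Lemma~\ref{lemma1} with $k = \left\lfloor\frac{n-1}{2}\right\rfloor$, so $e(-1) = \binom{\lfloor (n-1)/2\rfloor}{2}$ and all other edges are coloured $1$. Since this graph contains no forest on $\left\lfloor\frac{n-1}{2}\right\rfloor$ edges with no isolated vertices (its largest such forest is smaller), no spanning tree of $K_n$ can have exactly $\left\lfloor\frac{n-1}{2}\right\rfloor$ of its edges coloured $-1$; hence no spanning tree $T$ has $f(T) \in \{0\}$ when $n-1$ is even, nor $f(T) \in \{-1,1\}$ in the sense required when $n-1$ is odd — a careful parity count shows $|f(T)| \ge 2$ for every spanning tree $T$.

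The step I expect to require the most care is the sharpness argument: one must correctly track how the number of $(-1)$-edges in a spanning tree relates to $f(T)$ in both parity cases, and confirm that the extremal configuration genuinely forbids the "middle" value $\left\lfloor\frac{n-1}{2}\right\rfloor$ of $(-1)$-edges and hence every almost-balanced split — but this is essentially the general sharpness mechanism already recorded in the proof of Theorem~\ref{MasterThm}, so it should go through cleanly once the identification $\half(\mathcal{F}) = \mathcal{F}_{\lfloor (n-1)/2\rfloor}$ is established.
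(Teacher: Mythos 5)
Your proposal is correct and follows essentially the same route as the paper: the closed family of spanning trees, the identification $\half(\mathcal{T}_n)=\mathcal{F}_{\lfloor (n-1)/2\rfloor}$, Lemma~\ref{lemma1} to compute the Tur\'an number, item 1 of the Master Theorem, and the feasibility check $2\bigl(\binom{\lfloor (n-1)/2\rfloor}{2}+1\bigr)\le\binom{n}{2}$. Your sharpness discussion just spells out the general mechanism already built into the Master Theorem, which is all the paper invokes.
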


\begin{proof}
Let $\mathcal{T}_n$ be the family of all spanning trees of $K_n$, i.e. the family of all trees on $n-1$ edges, which is a closed family by Lemma \ref{matroids}. Clearly, ${\rm Half}(\mathcal{T}_n) = \mathcal{F}_{\lfloor \frac{n-1}{2} \rfloor}$. Then we have, with Lemma \ref{lemma1}, that
\[\min \{ e(-1), e(1)  \} > \binom{\lfloor \frac{n-1}{2} \rfloor}{2} = {\rm ex}(n, \mathcal{F}_{\lfloor \frac{n-1}{2} \rfloor}) = {\rm ex}(n, {\rm Half}(\mathcal{T}_n)).\] 
Hence, Theorem \ref{MasterThm} yields the result.

The only requirement now for this to work is that  $2 \left( \binom{\lfloor \frac{n-1}{2} \rfloor}{2} +1 \right) \leq \binom{n}{2} = e(K_n)$, which is always true.
\end{proof}

\subsection{Zero-sum trees of diameter at most $3$}

We now consider spanning trees of diameter at most three. We denote by $\mathcal{T}_n^3$ the family of all spanning trees of $K_n$ of diameter at most $3$.

\begin{lemma} \label{lemmadiam3}
$\mathcal{T}_n^3$ is a closed family. Moreover, for any two spanning trees $T, T^* \in \mathcal{T}_n^3$, there is a chain $T = T_1, T_2, \ldots, T_q = T^*$ of trees contained in $\mathcal{T}_n^3$ such that, for $1 \le i \le q-1$, $T_{i+1}$ is obtained from $T_i$ by an edge-replacement and such that $q \le 2(n-2)$.
\end{lemma}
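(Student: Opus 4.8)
The plan is to pin down the structure of $\mathcal{T}_n^3$ and then to connect any two of its members by a controlled sequence of edge-replacements. A spanning tree has diameter at most $3$ exactly when it has at most two vertices of degree $\ge 2$; hence $\mathcal{T}_n^3$ consists precisely of the stars and the double stars on $n$ vertices. I will record each $T\in\mathcal{T}_n^3$ as a double star with a pair of centres $\{p,q\}$ — meaning $pq\in E(T)$ and every edge is incident with $p$ or with $q$ — where for a star $S_p$ one takes $q$ to be any neighbour of $p$ owning no leaves. Two edge-replacements will be used that clearly keep us inside $\mathcal{T}_n^3$: a \emph{rebalancing} move, which re-attaches a single leaf of $p$ to $q$ or a single leaf of $q$ to $p$ (preserving $\{p,q\}$, and in the degenerate direction turning a star into a double star on the same pair of centres, or back); and a \emph{shift} move, which — when one of the centres, say $q$, owns exactly one leaf $w$ — deletes $qw$ and adds $q'w$, where $q'$ is any current leaf of $p$, yielding a double star with centres $\{p,q'\}$ in which $q'$ owns the lone leaf $w$. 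A single shift changes exactly one of the two centres, so chaining rebalancing and shift moves one may pass from any $T$ to any $T^*$ — for instance, rebalance $T$ down to a star $S_a$ (with $a$ a centre of $T$), walk $S_a$ to $S_c$ (with $c$ a centre of $T^*$) through double stars on $\{a,c\}$, then rebalance $S_c$ up to $T^*$. This already proves $\mathcal{T}_n^3$ is a closed family.

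For the bound $q\le 2(n-2)$ I will run a sharper version of this walk, in five stages. Label the centres of $T$ by $a,b$ with $\deg_T(a)\ge\deg_T(b)$, and those of $T^*$ by $c,d$ with $\deg_{T^*}(c)\ge\deg_{T^*}(d)$, and assume first that $\{a,b\}\cap\{c,d\}=\emptyset$ and neither tree is a star. Stage 1: rebalance until $b$ owns a single leaf — at most $\deg_T(b)-2\le\lfloor n/2\rfloor-2$ moves. Stage 2: one shift $b\to c$, reaching the pair $\{a,c\}$ with $a$ owning $n-3$ leaves. Stage 3: rebalance until $a$ owns a single leaf — exactly $n-4$ moves. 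Stage 4: one shift $a\to d$, reaching the pair $\{c,d\}$ with $d$ owning its single leaf. Stage 5: rebalance to the leaf-partition of $T^*$ — at most $\deg_{T^*}(d)-2\le\lfloor n/2\rfloor-2$ moves, provided the leaf that $d$ acquires in Stage 4 is one of $d$'s leaves in $T^*$. Adding up, this is at most $2n-6$ edge-replacements, so $q\le 2n-5\le 2(n-2)$. When $T$ or $T^*$ is a star, or when $\{a,b\}$ and $\{c,d\}$ meet, some stages collapse (e.g.\ if a centre of $T$ is also a centre of $T^*$, the ``crossing'' Stages~2--4 are replaced by a single shift or disappear), and the count only improves; the orders $n\le 5$, where $\mathcal{T}_n^3$ need not be the full spanning-tree family, are checked by hand, and $n=2$ (where $\mathcal{T}_n^3=\{K_2\}$) is excluded.

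The point needing care — and the natural home of an off-by-one — is the proviso in Stage 5: one must verify that the freedom in choosing which leaf stays behind in Stages 1 and 3, and which leaf is exported in Stages 2 and 4, always suffices to make $d$ inherit a ``good'' leaf, so that Stage 5 really costs only $\deg_{T^*}(d)-2$. This freedom is exhausted only in very special configurations — essentially when $d$'s unique leaf in $T^*$ is forced to be the centre $a$ of $T$, which forces $\deg_{T^*}(d)=2$, or when $\deg_T(b)=2$ or $\deg_{T^*}(d)=2$ makes the relevant choice unavoidable — and in each of these the offending stage is itself of length $0$ or $2$, so a short separate computation bounds the total by $\lfloor 3n/2\rfloor-2$, still at most $2(n-2)$ once $n\ge 6$ (with $n\in\{4,5\}$ handled directly). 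Thus the real work is this case bookkeeping rather than any single trick; and the bound is tight — taking $T$ a balanced double star on centres $\{1,2\}$ and $T^*$ a double star whose smaller centre has degree $2$ with $1$ as its only leaf forces $q=2(n-2)$.
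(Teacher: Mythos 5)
Your proposal is correct and follows essentially the same route as the paper: identify the members of $\mathcal{T}_n^3$ as the stars and double stars, move leaves one at a time between (possibly changing) centres, and route any double star through a (near-)star at one centre before rebuilding the target, giving a generic count of $2n-6\le 2(n-2)$ edge-replacements. The only caveat is that your exceptional-case bookkeeping (the Stage~5 proviso and the closing tightness claim) is sketched rather than carried out, but the paper's own accounting is no finer and the quantitative bound is not actually used elsewhere, so this is harmless.
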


\begin{proof}
We observe that a spanning tree of $K_n$ of diameter at most $3$ is either a star $K_{1,n-1}$ or a double star  $S_{p,q}$ with centres $u$ and $v$ adjacent, such that $u$ has $p$ leaves, $v$ has $q$ leaves, and such that $p+q  =  n-2$.  We look at the following operations of edge-replacements.

\begin{enumerate}
\item{\emph{A spanning tree of $K_n$ of diameter $3$ which is a double star $S_{p,q}$ can be transformed into a spanning star via spanning trees of diameter at most $3$. This process requires $q$ steps.}

Let the spanning tree  be $S_{p,q}$ with $u$ and $v$ the centres, where $u$ has $p$ leaves and.  Consecutively, for every leaf $z$ incident with $v$,  delete the edge $zv$ and add the edge $zu$ until all leaves of $v$ are attached to $u$, giving a  spanning star with centre $u$.  All the spanning trees in the process  have diameter $3$ except  the final star which has diameter $2$.}

\item{\emph{Any spanning star with centre $u$ can be transformed into a spanning star with centre $v \in V(K_n) \setminus \{u\}$ with all the trees in the process having diameter $3$. This process requires $n-2$ steps.}

Consecutively, for every leaf $z$ adjacent to the centre $u$ with exception of $v$, delete the edge $zu$ and add the edge $zv$ until all leaves are done.  All the intermediate trees in the process are of diameter $3$. }

\item{\emph{Any spanning star of $K_n$ with centre $u$ and leaf $v$ can be transformed into a double star $S_{p,q}$  with centres $u$ and $v$ having a particular set of $p$ leaves attached to $u$ and the remaining $q = n-2 - p$ leaves attached to $v$. This process requires $q$ steps.}

Given a spanning star with centre $u$ and a leaf $v$, and a set $S$ of $p$ vertices to remain leaves attached to $u$,  for every leaf $z$ of $u$ not in $S$, we delete the edge $zu$ and add the edge $zv$, giving the required double star $S_{p,q}$.  With exception of the star with which we started, every step involves a spanning tree of diameter $3$.}
\end{enumerate}

These three operations suffice to transform any spanning tree of $K_n$ of diameter at most $3$ to any other spanning tree of diameter at most $3$ with all trees in the process having diameter at most $3$.  In fact, the number of edge replacements is bounded by $2(n-2)$ edge-replacements. Indeed, if we have any pair of spanning trees $T$  and $T^*$ of diameter at most $3$, we have the following possible situations. If $T$ is a double star and $T^*$ a  star or vice versa, we require at most $n-3$ steps by operation 1 or 3 above. If $T$ and $T^*$ are both stars, we require at most $n-2$ steps by operation 2. Finally, if $T$ and $T^*$ are both double stars, we can use operation 1, then operation 2 and then operation 3, but we can do it the most efficient as possible. Suppose $T \cong S_{p,q}$ with $p\leq q$ and centres $u$ and $v$, and $T^* \cong S_{r,s}$ with $r \leq s$ and centres $x$ and $y$. We transform $T$ to a star with centre $v$, which involves $p \le \frac{n-2}{2}$ steps.  Then transform this star with centre $v$ to a star with centre $y$, involving $n -2$ steps.  Finally, we transform this star to  $T^*$ with $x$ as the centre, involving another $r \le \frac{n-2}{2}$ steps. Hence, the number of edge-replacements is at most $\frac{n-2}{2} +(n-2) +\frac{n-2}{2}  = 2(n-2)$.
\end{proof}

It is well known that ${\rm ex}(n,K_{1,k}) = \left \lfloor \frac{k-1}{2} n \right\rfloor$, see for instance \cite{LLP2012}. We will use this result for the following theorem.

\begin{theorem}
Let $n \geq 3$ and let $f: E(K_n) \rightarrow \{ -1 ,1\}$ such that 
\[\min\{ e( -1)  , e( 1) \}  > \left\lfloor \frac{n}{2} \left\lfloor\frac{n-3}{2}\right\rfloor \right\rfloor.\] 
Then there is a zero-sum or an almost zero-sum spanning tree of diameter at most $3$. 
\end{theorem}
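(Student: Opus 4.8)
The plan is to read this statement off from item~1 of the Master Theorem (Theorem~\ref{MasterThm}), applied to the closed family $\mathcal{F}=\mathcal{T}_n^3$, which is closed by Lemma~\ref{lemmadiam3}. Every member of $\mathcal{T}_n^3$ is a spanning tree of $K_n$ and so has $m=n-1$ edges; consequently $\half(\mathcal{T}_n^3)$ is the family of all isolate-free subgraphs on $\lfloor\frac{n-1}{2}\rfloor$ edges of spanning stars and spanning double stars of $K_n$ (and for $n\ge 3$ this family is nonempty, since $\lfloor\frac{n-1}{2}\rfloor\ge 1$). By item~1 of Theorem~\ref{MasterThm}, all that needs to be done is to verify that the hypothesis forces $\min\{e(-1),e(1)\}>\ex(n,\half(\mathcal{T}_n^3))$, and that this inequality is not vacuous.

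The key point is that we do not need the exact value of $\ex(n,\half(\mathcal{T}_n^3))$---which looks delicate, since $\half(\mathcal{T}_n^3)$ contains stars, double stars, and vertex-disjoint unions of two stars of the appropriate size---but only an upper bound. Since the spanning star $K_{1,n-1}$ belongs to $\mathcal{T}_n^3$, the star $K_{1,k}$ with $k:=\lfloor\frac{n-1}{2}\rfloor$ is an isolate-free subgraph of it on $k$ edges, whence $K_{1,k}\in\half(\mathcal{T}_n^3)$. Monotonicity of the Tur\'an number, together with the cited formula $\ex(n,K_{1,t})=\lfloor\frac{t-1}{2}n\rfloor$ and the elementary identity $\lfloor\frac{n-1}{2}\rfloor-1=\lfloor\frac{n-3}{2}\rfloor$, then gives
\[
\ex\bigl(n,\half(\mathcal{T}_n^3)\bigr)\ \le\ \ex\bigl(n,K_{1,k}\bigr)\ =\ \left\lfloor\frac{k-1}{2}\,n\right\rfloor\ =\ \left\lfloor\frac{n}{2}\left\lfloor\frac{n-3}{2}\right\rfloor\right\rfloor.
\]

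Hence the hypothesis $\min\{e(-1),e(1)\}>\lfloor\frac{n}{2}\lfloor\frac{n-3}{2}\rfloor\rfloor$ already implies $\min\{e(-1),e(1)\}>\ex(n,\half(\mathcal{T}_n^3))$, which is precisely the condition required in item~1 of Theorem~\ref{MasterThm}. To see that this condition is not vacuous, one checks $2\bigl(\ex(n,\half(\mathcal{T}_n^3))+1\bigr)\le\binom{n}{2}$; using the bound $\ex(n,\half(\mathcal{T}_n^3))\le\lfloor\frac{n}{2}\lfloor\frac{n-3}{2}\rfloor\rfloor\le\frac{n(n-3)}{4}$ this reduces to $n\ge 3$, which holds by assumption. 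Therefore Theorem~\ref{MasterThm} yields a zero-sum or almost zero-sum spanning subgraph of $K_n$ that is a member of $\mathcal{T}_n^3$, i.e.\ a zero-sum or almost zero-sum spanning tree of diameter at most~$3$, as claimed.

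I do not expect a real obstacle here: this is a direct application of the Master Theorem, and the only step that takes a moment's thought is realising that replacing $\half(\mathcal{T}_n^3)$ by its single member $K_{1,k}$---and hence by the known value $\ex(n,K_{1,k})$---loses nothing for the purposes of item~1. The price of this shortcut is that the resulting bound need not be tight: the extremal $K_{1,k}$-free graphs (roughly, disjoint unions of cliques of order $k$) already contain members of $\half(\mathcal{T}_n^3)$ of the form $K_{1,a}\cup K_{1,b}$ with $a+b=k$, so $\ex(n,\half(\mathcal{T}_n^3))$ is in general strictly smaller than $\ex(n,K_{1,k})$; this is presumably why, in contrast to the other theorems in this section, the statement makes no sharpness claim.
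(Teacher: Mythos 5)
Your proposal is correct and follows essentially the same route as the paper: both proofs invoke Lemma~\ref{lemmadiam3} for closedness, bound $\ex(n,\half(\mathcal{T}_n^3))$ from above by $\ex(n,K_{1,\lfloor\frac{n-1}{2}\rfloor})=\lfloor\frac{n}{2}\lfloor\frac{n-3}{2}\rfloor\rfloor$ using the fact that $K_{1,\lfloor\frac{n-1}{2}\rfloor}\in\half(\mathcal{T}_n^3)$, and then apply item~1 of the Master Theorem after checking non-vacuousness. Your closing remark on why no sharpness is claimed matches the paper's own discussion.
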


\begin{proof}
Since $K_{1, n-1} \in \mathcal{T}_n^3$, we clearly have $K_{1, \lfloor\frac{n-1}{2} \rfloor} \in {\rm Half}(\mathcal{T}_n^3)$. Hence, we can conclude that
\[{\rm ex}(n, {\rm Half}(\mathcal{T}_n^3)) \le {\rm ex}(n, K_{1,\lfloor\frac{n-1}{2} \rfloor}) = \left \lfloor \frac{\lfloor\frac{n-1}{2} \rfloor-1}{2} n \right\rfloor =\left\lfloor \frac{n}{2} \left\lfloor\frac{n-3}{2}\right\rfloor \right\rfloor.\]
By Lemma \ref{lemmadiam3},  $\mathcal{T}_n^3$ is a closed family. Hence, by Theorem \ref{MasterThm}, there is a zero-sum or an almost zero-sum spanning tree of diameter at most $3$.

It remains to check that  $2 \left(\left\lfloor\frac{n}{2} \left\lfloor\frac{n-3}{2}\right\rfloor\right\rfloor +1 \right) \leq \frac{n(n-1)}{2}$,  which holds true for $n \geq 3$.
\end{proof}

Observe that one cannot hope to obtain a zero-sum or an almost zero-sum star (i.e. a tree of diameter $2$) even in the case that $\{e(-1), e(1) \} = \left\{ \left\lceil\frac{(n-1)}{4}\right\rceil, \left\lfloor\frac{(n-1)}{4}\right\rfloor \right\}$ where  $|f(K_n)| \le 1$.  This is because, for infinitely many $n$'s there
 are integers $x$ and $y$ such that  $x +y  = n$, $x >y$, for which we can split $V(K_n) = (X \cup  Y)$  such that  $|X|= x$,  $|Y| = y$  and  $\binom{x}{2}  =  xy + \binom{y}{2}$, being able to colour all edges in $X$ with $-1$ and  all edges in $E(K_n) \backslash E(X)$  with  $+1$ such that $e(-1) = e(1)$ \cite{caro2019zero2, clztotomni}, but where there is no zero-sum spanning star nor an almost zero-sum spanning star.

\section{Zero-sum spanning trees in other graph classes}

In this section, we will deal with graphs classes that are different from the family of complete graphs.

\subsection{Zero-sum spanning trees  in $K_3$-free graphs}

\begin{lemma} \label{lemmabip}
Let $k$ be a positive integer, and let $G$ be a $K_3$-free graph with $e(G) > \lfloor \frac{k^2}{4}\rfloor$. Then $G$ contains a forest $F$ on at least $k$ edges, and the bound is sharp.  
\end{lemma}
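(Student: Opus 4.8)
The plan is to argue by induction on $k$, mirroring the structure of the proof of Lemma \ref{lemma1} but replacing the star-or-delete-a-vertex dichotomy with one adapted to the triangle-free setting. The base cases $k=1,2$ are immediate: a $K_3$-free graph with more than $0$, respectively $1$, edge contains a forest with $1$, respectively $2$, edges (two edges in a triangle-free graph never form a triangle, so any two edges sharing or not sharing a vertex give a forest on $2$ edges). So assume the statement holds for $k$ and let $G$ be $K_3$-free with $e(G) > \lfloor (k+1)^2/4 \rfloor$. First I would dispose of the high-degree case: if $\Delta(G) \ge k+1$, a vertex $v$ of that degree together with $k+1$ of its incident edges gives a star $K_{1,k+1}$, which is a forest on $k+1$ edges. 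So we may assume $\Delta(G) \le k$.

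The heart of the argument is the low-degree step. Pick a vertex $v$ of minimum degree $\delta = \delta(G) \ge 1$ and set $G^* = G - v$, which is still $K_3$-free. I need $e(G^*) > \lfloor k^2/4 \rfloor$ to invoke the induction hypothesis, i.e. I need $\deg(v) \le e(G) - \lfloor k^2/4\rfloor - 1$. Using $e(G) \ge \lfloor (k+1)^2/4\rfloor + 1$ and the identity $\lfloor (k+1)^2/4\rfloor - \lfloor k^2/4\rfloor = \lceil k/2 \rceil$, it suffices to have $\delta(G) \le \lceil k/2\rceil$. If that holds, then $G^*$ contains a forest $F^*$ on $k$ edges by induction; since $\deg_G(v) \ge 1$, add $v$ and one edge of $G$ incident with $v$ to get a forest on $k+1$ edges and we are done. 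Thus I may further assume $\delta(G) \ge \lceil k/2\rceil + 1 > k/2$. But now every vertex has degree exceeding $k/2$ while $\Delta(G) \le k$; in particular $G$ has minimum degree more than $n/2$ would force... — more to the point, a triangle-free graph with $\delta(G) > k/2$ on the relevant vertex set: here I would bring in the structural fact that a $K_3$-free graph with all degrees greater than $k/2$ must itself contain a long path, hence a long forest. Concretely, a connected triangle-free graph with minimum degree $\delta$ contains a path on $2\delta$ vertices (a standard consequence of the Erdős–Gallai / Dirac-type argument for triangle-free graphs), hence a forest on $2\delta - 1 \ge 2(\lceil k/2\rceil+1) - 1 \ge k+1$ edges. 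Taking the component of $G$ of largest minimum degree (or just any component, since $\delta$ is a global bound) yields the required forest.

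The main obstacle I anticipate is making the last case watertight: I must be careful that the "long path in a triangle-free graph of large minimum degree" statement is applied to a single connected piece and that the numerology $2\delta - 1 \ge k+1$ really closes the gap left by the induction step. An alternative, possibly cleaner route for that final case is to avoid path-length lemmas entirely and instead observe that $G$ then has at least $\frac{n\delta}{2} > \lfloor (k+1)^2/4\rfloor$ edges only when $n$ is not too small, and to handle the two regimes $n \le k$ versus $n \ge k+1$ separately — when $n$ is large relative to $k$ one has lots of room, and when $n$ is close to $k$ the Turán-type bound $e(G) \le \lfloor n^2/4\rfloor \le \lfloor (k+1)^2/4 \rfloor$ actually contradicts the hypothesis $e(G) > \lfloor (k+1)^2/4\rfloor$. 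This second observation is in fact the slick finish: if $\delta(G) > k/2$ and $G$ is triangle-free then either $n \ge k+1$, in which case a direct greedy path-building argument produces a path on $k+1$ edges, or $n \le k$, in which case $e(G) \le \lfloor n^2/4\rfloor \le \lfloor k^2/4\rfloor < \lfloor (k+1)^2/4\rfloor + 1 \le e(G)$, a contradiction. For sharpness, the complete bipartite graph $K_{\lfloor k/2\rfloor, \lceil k/2\rceil}$ has $\lfloor k^2/4\rfloor$ edges, is $K_3$-free, and its largest forest (a spanning tree) has only $k-1$ edges, so the bound $\lfloor k^2/4\rfloor$ cannot be lowered.
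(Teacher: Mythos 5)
Your induction skeleton --- the base cases $k=1,2$, deleting a minimum-degree vertex when $\delta(G)\le\lceil k/2\rceil$ so that $e(G-v)>\lfloor k^2/4\rfloor$, and the sharpness example $K_{\lfloor k/2\rfloor,\lceil k/2\rceil}$ --- coincides with the paper's proof. Where you genuinely diverge is in closing the remaining case $\delta(G)\ge\lceil k/2\rceil+1$. The paper does this with a two-line elementary observation: pick any edge $uv$; since $G$ is $K_3$-free, $u$ and $v$ have no common neighbour, so $uv$ together with all other edges at $u$ and all other edges at $v$ forms a double star, hence a tree, with at least $2\lfloor\frac{k+1}{2}\rfloor+1\ge k+1$ edges. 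You instead import the Erd\H{o}s--Gallai/Dirac-type fact that a connected graph of minimum degree $\delta$ contains a path on $\min\{n,2\delta+1\}$ vertices, combined with the observation that a triangle-free component with an edge has $n\ge 2\delta$. That fact is true and, applied per component as you do, the numerology $2\delta-1\ge k+1$ does close the case; but it is much heavier machinery than needed and you would have to prove it or cite it properly. Your proposed ``slick finish'' is weaker and should be dropped: a direct greedy path extension only guarantees a path with $\delta>k/2$ edges, not $k+1$; reaching length $2\delta$ requires the full longest-path rotation argument, so the $n\ge k+1$ branch of that alternative is not justified as written. Also, your preliminary reduction to $\Delta(G)\le k$ is never used afterwards and can be deleted. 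Net: your argument stands if you lean on the long-path lemma, but the paper's double-star trick is self-contained and is the intended way to exploit triangle-freeness at this step.
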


\begin{proof}
We proceed by induction on $k$.  For $k = 1,2$, it is true.  Assume this is true for $k$ and let us prove it for $k+1$.
Observe first that $\lfloor \frac{q^2}{4} \rfloor =  \lfloor \frac{q}{2} \rfloor \lceil \frac{q}{2} \rceil$ for any non-negative integer $q$. Now let $G$ be a $K_3$-free graph with $e(G) > \lfloor \frac{(k+1)^2}{4}\rfloor $. We may assume no vertex in $G$ is isolated.  Suppose first that $\delta(G) \leq \lfloor \frac{k+1}{2}\rfloor$ and let $v$ be a vertex of minimum degree in $G$.  Consider $G^* = G -  v$.  Then $G^*$ is again $K_3$-free, and has the following edge-number:
\begin{align*}
e(G^*) = e(G) - \deg_G(v) & > \left\lfloor \frac{(k+1)^2}{4} \right\rfloor -  \left\lfloor \frac{k+1}{2}\right\rfloor \\
& =\left\lfloor \frac{k+1}{2} \right\rfloor \left\lceil \frac{k+1}{2} \right\rceil -  \left\lfloor \frac{k+1}{2}\right\rfloor  \\
& = \left\lfloor \frac{k+1}{2} \right\rfloor \left(\left\lceil \frac{k+1}{2} \right\rceil -  1 \right) \\
& = \left\lfloor \frac{k}{2} \right\rfloor \left\lceil \frac{k}{2} \right\rceil = \left\lfloor \frac{k^2}{4} \right\rfloor .
\end{align*}
Hence, by the induction hypothesis, $G^*$ contains a forest $F^*$ on $k$ edges.  Now add $v$ and one edge incident with $v$ to get a forest $F$ with $k+1$ edges, as required.

So we may assume $\delta(G) \geq \lfloor \frac{k+3}{2} \rfloor$. Take two adjacent vertices $u$ and $v$.  Since $G$ is $K_3$-free,  $u$ and $v$ have no common neighbour.  So all the edges incident with $v$ but not $u$, together with all the edges incident with $u$ but not $v$, and the edge $uv$ form a tree on at least $2 \lfloor\frac{k +1}{2}\rfloor +1  \ge   k +1$ edges and we are done.

For the sharpness, take the complete bipartite graph $K_{\lfloor \frac{k}{2}\rfloor, \lceil \frac{k}{2}\rceil}$ which has $ \lfloor \frac{k}{2}\rfloor \lceil \frac{k}{2}\rceil$ edges but no forest on $k$ edges.
\end{proof}
 
\begin{theorem}
Let $n$ be a positive integer and let $G$ be a connected $K_3$-free graph of order $|V(G)| \in \{2n, 2n+1\}$ such that $e(G) \ge \lfloor \frac{n^2}{2} \rfloor+2$. Let $f: E(G)  \rightarrow \{ -1 ,1\}$ be such that  $\min \{ e(-1), e(1)  \} > \lfloor \frac{n^2}{4}\rfloor$.  Then  there is a spanning zero-sum or an almost zero-sum tree, and these bounds are sharp.
\end{theorem}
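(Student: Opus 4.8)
The plan is to mimic the proof of the $K_n$-version (Theorem on spanning zero-sum trees in $K_n$), but replacing the Master Theorem, which is stated for closed families in $K_n$, by the more general principle alluded to in the remark after Theorem~\ref{MasterThm}: the Interpolation Lemma (Lemma~\ref{interpolation}) holds verbatim for a closed family in an arbitrary host graph $G$, so all we really need is (i) that the spanning trees of $G$ form a closed family in $G$, and (ii) a Tur\'an-type bound guaranteeing that, under the colour-balance hypothesis, there is a monochromatic forest on $\lfloor \frac{|V(G)|-1}{2}\rfloor$ edges lying inside some spanning tree of $G$. Point (i) is exactly Lemma~\ref{matroids}(3), since $G$ is connected. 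Point (ii) is where Lemma~\ref{lemmabip} enters.

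First I would set $N = |V(G)| \in \{2n, 2n+1\}$ and $m = N-1$, the number of edges of a spanning tree of $G$; so $\lfloor m/2 \rfloor = \lfloor (N-1)/2 \rfloor$, which equals $n-1$ when $N=2n$ and equals $n$ when $N = 2n+1$. Let $c \in \{0,1\}$ be the parity of $\lfloor m/2\rfloor$. Since $\min\{e(-1),e(1)\} > \lfloor n^2/4 \rfloor$, consider the colour class, say $E(-1)$, and the $K_3$-free subgraph $G[-1] := (V(G), E(-1))$ of $G$; it is $K_3$-free because $G$ is. By Lemma~\ref{lemmabip} applied with $k = \lfloor m/2\rfloor$ — here one checks $\lfloor k^2/4\rfloor \le \lfloor n^2/4\rfloor$, which holds since $k \le n$ — the graph $G[-1]$ contains a forest $F^-$ on $\lfloor m/2 \rfloor$ edges, all coloured $-1$; and $F^-$, being a forest in the connected graph $G$, extends to a spanning tree $T^-$ of $G$. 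Symmetrically we get a spanning tree $T^+$ of $G$ containing $\lfloor m/2\rfloor$ edges all coloured $+1$. Exactly as in the proof of item~1 of Theorem~\ref{MasterThm}, $f(T^-) \le \lceil m/2\rceil - \lfloor m/2\rfloor$ and $f(T^+) \ge \lfloor m/2\rfloor - \lceil m/2\rceil$; after disposing of the parity-$1$ case where one of these already gives an almost-zero-sum tree, we may assume $f(T^-) \le 0 \le f(T^+)$. Then the Interpolation Lemma applied to the closed family of spanning trees of $G$ produces a spanning tree $Z$ of $G$ with $|f(Z)| \le 1$, i.e. zero-sum or almost zero-sum.

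I also need to use the hypothesis $e(G) \ge \lfloor n^2/2\rfloor + 2$: this is precisely what guarantees the colour-balance hypothesis $\min\{e(-1),e(1)\} > \lfloor n^2/4\rfloor$ is satisfiable at all, since $e(-1)+e(1) = e(G)$ forces $\min\{e(-1),e(1)\} \le \lfloor e(G)/2 \rfloor$, and one checks $\lfloor (\lfloor n^2/2\rfloor + 2)/2\rfloor \ge \lfloor n^2/4\rfloor + 1$. (Conversely, if $e(G)$ were smaller, the statement would be vacuous.) For sharpness I would exhibit, for both parities of $N$, a connected $K_3$-free $G$ on $N$ vertices together with a colouring $f$ with $\min\{e(-1),e(1)\} = \lfloor n^2/4\rfloor$ and no zero-sum or almost-zero-sum spanning tree: take the sharpness example of Lemma~\ref{lemmabip}, namely have one colour class be a copy of $K_{\lfloor k/2\rfloor,\lceil k/2\rceil}$ with $k = \lfloor (N-1)/2\rfloor$ sitting inside $G$ so that this class contains no forest on $\lfloor m/2\rfloor$ edges, while the graph $G$ itself is chosen bipartite, connected, with the right total number of edges, and with the other colour class also forbidding a monochromatic $\lfloor m/2\rfloor$-edge forest inside a spanning tree; then no spanning tree can be split into two monochromatic halves, so every spanning tree has weight of absolute value at least $2$ (or at least $2$ after adjusting for parity).

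The main obstacle I anticipate is the sharpness construction, not the forcing direction: the forcing direction is a routine transfer of the $K_n$ argument once one observes Lemma~\ref{interpolation} is host-graph agnostic. For sharpness one must build a single connected $K_3$-free (hence bipartite-or-worse, but $K_3$-free suffices) graph $G$ on exactly $2n$ or $2n+1$ vertices with exactly the critical edge count in which \emph{both} colour classes are simultaneously extremal for the relevant Tur\'an problem and no spanning tree manages to be balanced; coordinating the two colour classes inside one graph, and handling the two parities of $|V(G)|$ separately (the off-by-one in $\lfloor (N-1)/2\rfloor$ and in $c$ makes the two cases genuinely different), is the delicate part. I would also double-check the small-order boundary cases by hand, as the floor functions in $e(G) \ge \lfloor n^2/2\rfloor+2$ and in the Tur\'an bound interact nontrivially for small $n$.
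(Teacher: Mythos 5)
Your forcing direction is correct and is essentially the paper's argument: the paper likewise applies Lemma~\ref{lemmabip} to the two monochromatic subgraphs (both $K_3$-free as subgraphs of $G$), completes the resulting forests $F^-,F^+$ to spanning trees $T^-,T^+$ of the connected graph $G$, and invokes Lemma~\ref{interpolation} on the closed family of spanning trees of $G$ (Lemma~\ref{matroids}). The only cosmetic difference is that the paper applies Lemma~\ref{lemmabip} with $k=n$ in both parity cases, so that $T^-$ (with $N-1\le 2n$ edges, at least $n$ of them coloured $-1$) automatically satisfies $f(T^-)\le 0$ and symmetrically $f(T^+)\ge 0$, with no need for your separate dispatch of the parity-$1$ case; your choice $k=\lfloor (N-1)/2\rfloor$ also works but makes that extra step necessary when $N=2n$.

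The genuine gap is in the sharpness, which you never actually construct, and your stated plan is both harder than necessary and, as calibrated, would miss the target. You do not need both colour classes to be simultaneously Tur\'an-extremal: it suffices that the \emph{smaller} class, by itself, cannot contribute a forest of the size that a zero-sum spanning tree would require. The paper takes a connected bipartite $G$ on $2n$ or $2n+1$ vertices with at least $\lfloor n^2/2\rfloor+2$ edges containing a subgraph $H\cong K_{\lfloor n/2\rfloor,\lceil n/2\rceil}$, colours $E(H)$ with $-1$ and everything else with $+1$; then $e(-1)=\lfloor n^2/4\rfloor$ exactly, and since $|V(H)|=n$, any acyclic set of $-1$-edges has at most $n-1$ edges, while a zero-sum spanning tree needs $n$ edges of each colour. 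Note also that your proposed gadget $K_{\lfloor k/2\rfloor,\lceil k/2\rceil}$ with $k=\lfloor (N-1)/2\rfloor$ has only $\lfloor (n-1)^2/4\rfloor$ edges when $N=2n$, which sits strictly below the threshold $\lfloor n^2/4\rfloor$ and therefore would not witness sharpness of the stated bound; the extremal configuration for Lemma~\ref{lemmabip} must be taken with parameter $n$, not $\lfloor (N-1)/2\rfloor$, precisely because the relevant obstruction is the vertex count of the host of the $-1$-class rather than the half-tree edge count.
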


\begin{proof}
Consider the graph $G^-$ induced by the $-1$-edges, and the graph  $G^+$ induced by the $1$-edges. Since $\min\{ e(G^-), e(G^+)\} > \lfloor \frac{n^2}{4}\rfloor$, we can use Lemma \ref{lemmabip} with $k = n$, from which follows that $G^-$ and $G^+$ contain a forest $F^-$ and, respectively, $F^+$ on at least $n$ edges each.  Complete $F^-$  to a spanning tree $T^-$ on $2n$ edges and $F^+$ to a spanning tree $T^+$ on $2n$ edges. Since, clearly, $f(F^-) \le 0$ and $f(F^+) \ge 0$, the Interpolation Lemma \ref{interpolation} yields the result.

We now consider sharpness. Let $G$ be a connected bipartite graph on $2n$ or $2n+1$ vertices and at least $\lfloor \frac{n^2}{2} \rfloor+2$ edges such that $G$ has a subgraph $H$ isomorphic to $K_{\lfloor \frac{n}{2} \rfloor,\lceil \frac{n}{2} \rceil}$. Evidently, $G$ is $K_3$-free. We colour all edges from $H$ with $-1$, and all remaining edges with $1$.  Clearly, $e(1)  > e( -1)  = \lfloor \frac{n}{2} \rfloor \lceil \frac{n}{2} \rceil =  \lfloor \frac{n^2}{4} \rfloor$. To have a zero-sum spanning tree, we need $n$ edges coloured $-1$  and $n$  edges coloured  $+1$.  However,  from $E(H)$ we can take only $n-1$ edges, since otherwise we would have a cycle.
\end{proof}

\subsection{Zero-sum spanning trees in $d$-trees}

For an integer $d \geq 1$, a graph $G$ is said to be  \emph{$d$-degenerate} (also called in the literature \emph{partial $d$-tree}) if for every induced subgraph $H$ of $G$, $\delta(H) \leq d$. A \emph{$d$-tree} is a maximal $d$-degenerate graph, that is, a graph obtained from the complete graph $K_{d+1}$  by successively adding vertices, each vertex being adjacent to exactly $d$ vertices in the former graph.  The number of edges in a $d$-tree on $n \geq d +1$  vertices is $nd  - \binom{d +1}{2}$.  It  is clear  that every $d$-tree on $n$  vertices contains an induced $d$-tree on $n'$ vertices for every k,  $d +1 \leq n' \leq n$.  For early surveys on $d$-degenerate graphs and $d$-trees, see  \cite{lick1970k, Rose1974}, and further results on maximal $d$-degenerate graphs and $d$-trees can be found in \cite{filakova1997note, Patil1986}.

 \begin{lemma} \label{degenerate}
Let $k$ and $d$ be non-negative integers.  Let $G$ be a $d$-degenerate graph with
\[e(G) > \left\{\begin{array}{ll}
\binom{k}{2} , & \mbox{if } k \le d,\\
kd - \binom{ d+1}{2}, & \mbox{else}.
\end{array} \right.\]
Then $G$ contains a forest on at least $k$ edges. Moreover, the bound is best possible for any choice of the parameters.
 \end{lemma}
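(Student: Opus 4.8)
The statement is a Turán-type bound for forests inside $d$-degenerate graphs, so I would prove it by induction on $k$, mirroring the structure of Lemma \ref{lemma1} and Lemma \ref{lemmabip}. The two regimes $k \le d$ and $k > d$ should be handled slightly differently, but the induction can be carried out on $k$ with $d$ fixed. For the base cases ($k=0,1$, and perhaps $k=d$ as the transition point) the claim is immediate: a $d$-degenerate graph with more than $\binom{k}{2}$ edges (when $k \le d$) certainly is not a subgraph of $K_k$, hence has more than $k-1$ independent... no — more carefully, I would instead just note that a graph with more than $\binom{k}{2}$ edges has at least $k$ edges and, being a forest-extraction problem, one uses the standard fact that a graph on $m$ edges with no forest of size $k$ has a very restricted structure. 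The cleanest route is the minimum-degree dichotomy used in the two earlier lemmas.

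Here is the inductive step I would run for $k \ge 1$. Let $G$ be $d$-degenerate with $e(G)$ exceeding the stated threshold $t(k,d)$; I want a forest on $k$ edges. Since $G$ is $d$-degenerate, it has a vertex $v$ with $\deg_G(v) \le d$. Delete $v$ to get $G^* = G - v$, which is again $d$-degenerate, with $e(G^*) \ge e(G) - d > t(k,d) - d$. The heart of the matter is the numeric inequality $t(k,d) - d \ge t(k-1,d)$, which must be checked in both regimes: for $k-1 \ge d$ one needs $\big(kd - \binom{d+1}{2}\big) - d = (k-1)d - \binom{d+1}{2} = t(k-1,d)$, an exact identity; for the transition $k-1 < d \le k$ and for $k \le d$ one needs $\binom{k}{2} - d \ge \binom{k-1}{2}$ when $k \le d$, i.e. $k - 1 \ge \lceil \text{something} \rceil$... this is $\binom{k}{2} - \binom{k-1}{2} = k-1 \le d$, which holds precisely because $k \le d$ forces $k - 1 \le d$ — wait, that gives $k-1 \le d$, hence $e(G^*) > \binom{k}{2} - d \ge \binom{k-1}{2}$ only if $k - 1 \le d$, true. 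Actually I should be careful: subtracting $d$ may overshoot, so $e(G^*) > t(k,d) - d$ and I want this $\ge t(k-1,d)$; the computations above show equality or the needed inequality in each case, with the transition case $k = d+1$ requiring $t(d+1,d) - d = \big((d+1)d - \binom{d+1}{2}\big) - d = d^2 - \binom{d+1}{2} = \binom{d}{2} = t(d,d)$, again exact. So by induction $G^*$ contains a forest $F^*$ on $k-1$ edges; since $v$ had degree $\ge 1$ in $G$ (we may assume $G$ has no isolated vertices, or pick $v$ with $\deg_G(v)\ge 1$ — note if every low-degree vertex is isolated we can delete it for free and induct on the number of vertices), adding $v$ together with one edge at $v$ to $F^*$ — this edge need not be independent of $F^*$, but $v \notin V(F^*)$ would make it safe; if the unique neighbour chosen lies in $F^*$ that is fine too since $v$ itself is a new vertex, so no cycle is created — yields a forest $F$ on $k$ edges.

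For sharpness I would exhibit extremal graphs in each regime. When $k \le d$, the graph $K_k$ together with $n - k$ isolated vertices is $d$-degenerate (it is $(k-1)$-degenerate, hence $d$-degenerate), has $\binom{k}{2}$ edges, and contains no forest on $k$ edges since any forest inside $K_k$ has at most $k-1$ edges. When $k > d$, take a $d$-tree on $k$ vertices (which has exactly $kd - \binom{d+1}{2}$ edges) and append isolated vertices up to order $n$; this graph is $d$-degenerate, has the claimed edge count, and its largest forest has only $k-1$ edges because it is connected on $k$ vertices (every spanning tree has $k-1$ edges and there is no room for more in a forest on $k$ vertices).

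**Main obstacle.** The routine-looking part — the arithmetic reconciling the two-case threshold $t(k,d)$ under the operation $k \mapsto k-1$, $e \mapsto e - d$ — is exactly where the proof can go wrong, because the induction must cross the boundary $k = d+1$ cleanly, and one must verify that $t(k,d) - d \ge t(k-1,d)$ holds with the floor/parity issues absent here (there are none, which is a relief). A second subtlety is guaranteeing that the deleted vertex can be chosen with a positive-degree neighbour to attach in $G^*$, i.e. that deleting degree-$0$ vertices is harmless; this is handled by first discarding isolated vertices (they do not affect $e(G)$) and then invoking $d$-degeneracy on the remaining graph, whose minimum degree is then between $1$ and $d$.
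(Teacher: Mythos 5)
Your proof is correct and follows essentially the same route as the paper's: induction on $k$ via deletion of a minimum-degree vertex (degree between $1$ and $d$ after discarding isolates), the threshold arithmetic $t(k,d)-d \ge t(k-1,d)$ across both regimes, and the same extremal examples ($K_k$ for $k\le d$, a $d$-tree on $k$ vertices for $k>d$). The only cosmetic difference is that the paper dispatches the regime $k\le d$ and the base case $k=d+1$ by citing its Lemma~\ref{lemma1} (${\rm ex}(n,\mathcal{F}_k)=\binom{k}{2}$) rather than running the induction through that range.
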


\begin{proof}
If $d  = 1$, then $G$ is itself a forest and the bound on $e(G)$ is equal to $k-1$ in both cases, so the claim is trivial.
So we assume  $d \geq 2$.  For the case that $k \leq d$, the claim follows from Lemma \ref{lemma1}.  So we may assume that $k \ge d+1$. We will prove the statement by induction on $k$. Observe that, for $k = d +1$, $kd - \binom{ d+1}{2} = \binom{k}{2}$, so the base case is also covered by Lemma \ref{lemma1}. Suppose now the claim is true for $k$.  Now let $G$ be a $d$-degenerate graph with $e(G) > ( k +1)d - \binom{d +1}{2}$ edges. We can assume, without loss of generality, that $G$ has no isolated vertices.  Let $v \in V(G)$ be a vertex of minimum degree and define $G^* = G-v$, which is again $d$-degenerate. By the $d$-degeneracy of $G$, we have $1 \leq \deg(v) \leq d$. It follows that
\begin{align*}
e(G^*) & >  (k +1)d - \binom{d+1}{2} - \deg( v)  \\
& \geq ( k +1)d - \binom{d+1}{2}  - d \\
&=  kd - \binom{d+1}{2}.
\end{align*} 
Hence, by the induction hypothesis, $G^*$ contains a forest $F^*$ on $k$ edges.  Adding  the vertex $v$ and exactly one edge incident with it to $F^*$, we get a forest $F$ with at least $k+1$ edges in $G$, proving the induction step.

For the sharpness, consider, for the case that $1 \leq  k \leq d +1$, the complete graph $K_k$. Indeed, $K_k$ is $d$-degenerate and  has  $\binom{k}{2}$  edges, but no forest on $k$ edges. When $k \geq  d +1$, take any $d$-tree on $k$ vertices with exactly $kd - \binom{d+1}{2}$ edges, which contains no forest on $k$ edges.
\end{proof}

 We can now prove the theorem about zero-sum  spanning trees in $d$-trees.

\begin{theorem} \label{themdegenerate}
Let $G$ be a $d$-tree on $n \geq 2d+2$ vertices.  Let  $f: E(G)  \rightarrow \{ -1 ,1\}$  such that  $\min \{ e(-1), e(1)  \} > \lfloor\frac{n-1}{2}\rfloor d -\binom{d+1}{2}$.  Then  $G$ contains a  zero-sum or an almost zero-sum spanning tree, and the bound is sharp.
\end{theorem}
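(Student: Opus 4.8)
The plan is to run the same scheme as in the $K_3$-free case, replacing Lemma~\ref{lemmabip} by Lemma~\ref{degenerate}, using that every subgraph of a $d$-degenerate graph is again $d$-degenerate, and finishing with the Interpolation Lemma applied to the closed family of spanning trees of the connected graph $G$ (closedness coming from Lemma~\ref{matroids}). Set $k=\left\lfloor\frac{n-1}{2}\right\rfloor$; since $n\ge 2d+2$ we have $k\ge d$, and the threshold in the statement satisfies
\[\left\lfloor\frac{n-1}{2}\right\rfloor d-\binom{d+1}{2}=\begin{cases}\binom{k}{2}, & k=d,\\[2pt] kd-\binom{d+1}{2}, & k\ge d+1,\end{cases}\]
which in each case is precisely the bound occurring in Lemma~\ref{degenerate} for that value of $k$ (for $k=d$ one uses $\binom{d}{2}=d^2-\binom{d+1}{2}$). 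Let $G^-$ and $G^+$ be the spanning subgraphs of $G$ consisting of the $(-1)$-edges and the $1$-edges respectively; both are $d$-degenerate. Since $\min\{e(-1),e(1)\}$ exceeds the threshold, Lemma~\ref{degenerate} yields forests $F^-\subseteq G^-$ and $F^+\subseteq G^+$, each with at least $k$ edges, and after deleting surplus edges we may assume each has exactly $k=\left\lfloor\frac{n-1}{2}\right\rfloor$ edges.

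Because a $d$-tree is connected, I would extend $F^-$ to a spanning tree $T^-$ and $F^+$ to a spanning tree $T^+$ of $G$. Each of the $(n-1)-k$ newly added edges alters the weight by at most $+1$ (respectively, at least $-1$), so
\[f(T^-)\le -k+\bigl((n-1)-k\bigr)=n-1-2\left\lfloor\frac{n-1}{2}\right\rfloor\le 1,\qquad f(T^+)\ge 2\left\lfloor\frac{n-1}{2}\right\rfloor-(n-1)\ge -1.\]
If $n-1$ is odd and $f(T^-)=1$ or $f(T^+)=-1$, we already have an almost zero-sum spanning tree; otherwise $f(T^-)\le 0\le f(T^+)$ holds in both parities, and the Interpolation Lemma (Lemma~\ref{interpolation}), applied to the closed family of spanning trees of $G$, produces a spanning tree $Z$ with $|f(Z)|\le 1$, which is zero-sum when $n-1$ is even and almost zero-sum when $n-1$ is odd.

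For sharpness I would pick inside $G$ a subgraph $H$ on $\left\lfloor\frac{n-1}{2}\right\rfloor$ vertices that is an induced sub-$d$-tree when $\left\lfloor\frac{n-1}{2}\right\rfloor\ge d+1$ and a copy of $K_d$ (sitting inside the initial $K_{d+1}$ of $G$) when $\left\lfloor\frac{n-1}{2}\right\rfloor=d$; in both cases $e(H)=\left\lfloor\frac{n-1}{2}\right\rfloor d-\binom{d+1}{2}$. Colour the edges of $H$ with $-1$ and all remaining edges with $1$; a short count of the $nd-\binom{d+1}{2}$ edges of $G$ gives $e(-1)<e(1)$, so $\min\{e(-1),e(1)\}=e(-1)$ equals the threshold exactly. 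Since all $(-1)$-edges lie in $H$, any spanning tree of $G$ meets $E(H)$ in a forest on at most $\left\lfloor\frac{n-1}{2}\right\rfloor-1$ edges, so every spanning tree $T$ satisfies $f(T)\ge (n-1)-2\bigl(\left\lfloor\frac{n-1}{2}\right\rfloor-1\bigr)\ge 2$; hence no zero-sum or almost zero-sum spanning tree exists, and the hypothesis cannot be weakened.

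The step needing the most care is the interface with Lemma~\ref{degenerate}: one must track which of its two branches applies — in particular in the boundary regime $n=2d+2$, where $k=d$ and it is the ``$k\le d$'' branch $\binom{k}{2}$ that governs — and verify the identity $\binom{d}{2}=d^2-\binom{d+1}{2}$ so that the theorem's threshold coincides with the lemma's. Everything else is the routine forest-extension-plus-Interpolation-Lemma argument already deployed for complete and for $K_3$-free graphs.
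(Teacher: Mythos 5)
Your proof is correct and follows essentially the same route as the paper's: apply Lemma~\ref{degenerate} with $k=\lfloor\frac{n-1}{2}\rfloor$ to extract monochromatic forests, extend them to spanning trees of weights $\le 0$ and $\ge 0$ (after disposing of the $\pm 1$ cases), invoke the Interpolation Lemma, and prove sharpness by colouring an induced sub-$d$-tree (or $K_d$) on $\lfloor\frac{n-1}{2}\rfloor$ vertices with $-1$. In fact you are slightly more careful than the paper at the boundary $n=2d+2$, where $k=d$ and the $\binom{k}{2}$ branch of Lemma~\ref{degenerate} applies via the identity $\binom{d}{2}=d^2-\binom{d+1}{2}$, a case the paper's proof glosses over by asserting $k\ge d+1$.
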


\begin{proof}
Observe first that the condition $\min \{ e(-1), e(1)  \} > \lfloor\frac{n-1}{2}\rfloor d -\binom{d+1}{2}$ can always be satisfied. Indeed, for this to hold, it is required that  
\[2 \left(\left\lfloor\frac{n-1}{2} \right\rfloor d -\binom{d+1}{2}+1 \right) \leq nd - \binom{d+1}{2} = e(G),\]
which is always true. To prove the statement, we use Lemma \ref{degenerate} with $k = \lfloor\frac{n-1}{2}\rfloor \ge d+1$.  Then there exists a forest $F^-$ on at least $\lfloor\frac{n-1}{2}\rfloor$ edges coloured $-1$ and a forest $F^+$ on at least $\lfloor\frac{n-1}{2}\rfloor$ edges coloured $+1$.  Complete $F^-$  to a spanning tree $T^-$ on  $n-1$  edges  and complete $F^+$ to a spanning tree $T^+$ on $n-1$ edges. Then we have that $f(T^-) \leq 1$  while $f(T^+) \geq -1$.  If $f(T^-) = 1$  or $f(T^+) = -1$, $T^-$  or $T^+$ is an almost zero-sum tree. Hence, we can assume that $f(T^-) \leq 0$ and $f(T^+) \geq 0$, and so it follows with Lemma \ref{interpolation} that there is a  zero-sum or an almost zero-sum spanning tree. For the sharpness, consider any $d$-tree on $n \geq 2d+2$ vertices. Then it contains a $d$-tree $H$ on $\lfloor\frac{n-1}{2}\rfloor \geq  d + 1$ vertices which has clearly $e(H)  =  \lfloor\frac{n-1}{2}\rfloor d - \binom{d+1}{2}$.  Let  $f: E(G)  \rightarrow \{ -1 ,1\}$  such that  the edges of $H$ are coloured $ - 1$, and all remaining edges are coloured $1$.   Observe that $e(1) > e(-1)  =  \lfloor\frac{n-1}{2}\rfloor d -\binom{d+1}{2}$.  To  have a zero-sum spanning tree of $G$, we need precisely a forest with $\lfloor\frac{n-1}{2}\rfloor$ edges coloured  $- 1$,  which we are forced to choose from $H$. However, this is impossible since $|V(H)| = \lfloor\frac{n-1}{2}\rfloor$.
\end{proof}

\subsection{Zero-sum spanning trees in maximal planar graphs}

Since planar graphs are $5$-degenerate, results like Lemma \ref{degenerate} with $d=5$ and its consequences hold for planar graphs. However, one would expect that better bounds can be obtained than in the previous section by exploiting the special properties of planarity. We do this here for maximal planar graphs, keeping in mind that a maximal planar graph on $n \geq 3$ vertices has $3n-6$ edges and that any embedding of such a graph in the plane has the outer face bounded by a triangle.

\begin{lemma}\label{la:planar_forest}
Let  $k \ge 3$ be an integer and let $G$ be a planar  graph with $e(G) \ge 3k-5$. Then $G$ contains a forest on at least $k$ edges and the bound is best possible.
 \end{lemma}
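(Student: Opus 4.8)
The plan is to prove Lemma \ref{la:planar_forest} by induction on $k$, closely mirroring the structure of Lemma \ref{degenerate} (the $d=5$ case), but replacing the crude degeneracy bound $\deg(v)\le 5$ with a sharper use of the Euler-formula fact that a planar graph on $n'$ vertices has at most $3n'-6$ edges. First I would settle the base case $k=3$: a planar graph with $e(G)\ge 4$ edges certainly contains a forest on $3$ edges, since a graph with $4$ edges and no forest on $3$ edges would have every component with at most $2$ edges, forcing cycles/multi-edges that are impossible in a simple graph of that size — this is an easy finite check. For the sharpness at every $k$, the extremal example is a maximal planar graph $H$ on exactly $k$ vertices: it has $3k-6$ edges (one less than the hypothesis $3k-5$) and, being on only $k$ vertices, contains no forest on $k$ edges (a forest on $k$ edges needs $k+1$ vertices).

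For the inductive step, assume the statement for $k$ and let $G$ be planar with $e(G)\ge 3(k+1)-5 = 3k-2$. As in the earlier lemmas, I may assume $G$ has no isolated vertices. Let $v$ be a vertex of minimum degree; since $G$ is planar it is $5$-degenerate, so $1\le\deg(v)\le 5$. Set $G^\ast = G-v$, which is again planar. The naive estimate gives $e(G^\ast)\ge 3k-2-5 = 3k-7$, which is $\emph{not}$ quite $3(k)-5 = 3k-5$, so the straightforward subtraction fails by $2$ — this is the crux of the argument and where the main obstacle lies. To close this gap I would argue that $v$ can be chosen with $\deg(v)\le 3$, or handle the cases $\deg(v)\in\{4,5\}$ separately. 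The key observation is that if the minimum degree is $4$ or $5$, then $G$ is quite dense relative to its order: with $\delta(G)\ge 4$ we get $2e(G)\ge 4|V(G)|$, i.e. $|V(G)|\le e(G)/2$; combined with the planar bound $e(G)\le 3|V(G)|-6$ this forces $e(G)\le 3e(G)/2 - 6$, hence $e(G)\ge 12$, and more usefully bounds $|V(G)|$ from above in terms of $e(G)$. In fact when $\delta(G)\ge 4$ one can extract a large forest directly: a spanning forest of $G$ has $|V(G)|-(\text{number of components})\ge |V(G)|-1$ edges, and I must show $|V(G)|-1\ge k+1$, i.e. $|V(G)|\ge k+2$, whenever $e(G)\ge 3k-2$ and $\delta(G)\ge 4$.

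So the argument splits: if $\delta(G)\le 3$, pick $v$ of degree at most $3$, so $e(G^\ast)\ge 3k-2-3 = 3k-5 = 3k-5$, apply the induction hypothesis to get a forest $F^\ast$ on $k$ edges in $G^\ast$, then re-attach $v$ together with one incident edge to obtain a forest on $k+1$ edges in $G$. If $\delta(G)\ge 4$, then from $2e(G)\ge 4|V(G)|$ we get $|V(G)|\le \tfrac12 e(G)$; I then need $|V(G)|\ge k+2$ so that a spanning forest already has at least $k+1$ edges. Here I would instead use the planar inequality in the other direction: $e(G)\le 3|V(G)|-6$ gives $|V(G)|\ge \tfrac{e(G)+6}{3}\ge \tfrac{3k-2+6}{3} = k+\tfrac43 > k+1$, so $|V(G)|\ge k+2$, and a spanning forest of $G$ (which, $G$ being connected or not, has at least $|V(G)|-c$ edges where $c$ is the number of components, and $c\le |V(G)|/4$ since every vertex has degree $\ge 4$ hence lies in a component with $\ge 5$ vertices) has at least $|V(G)|-|V(G)|/5 = \tfrac45|V(G)|\ge \tfrac45(k+2)$ edges; for $k\ge 3$ this is at least $k+1$, completing the step. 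The main obstacle, as flagged, is exactly this bookkeeping in the $\delta(G)\ge 4$ regime — making sure the component count does not erode the forest size below $k+1$ — but since high minimum degree forces both few components and many vertices relative to $k$, the inequalities work out, and I would present the cleanest of these two sub-estimates once the constants are pinned down.
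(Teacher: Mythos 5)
Your base case, your sharpness example, and your $\delta(G)\le 3$ branch are all fine, but the $\delta(G)\ge 4$ branch has a genuine numerical gap: the closing inequality $\tfrac45(k+2)\ge k+1$ is equivalent to $k\le 3$, so it fails for every $k\ge 4$ (there is also an internal slip between ``$c\le |V(G)|/4$'' and the $|V(G)|/5$ actually used). The root cause is that you bound $|V(G)|$ using the \emph{global} Euler inequality $e(G)\le 3|V(G)|-6$, which only yields $|V(G)|\ge k+2$, and then pay a toll of up to $|V(G)|/5$ for the components; when $|V(G)|$ is close to $k+2$ and $c$ is more than $1$, the spanning forest you exhibit can have fewer than $k+1$ edges. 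Your assertion that ``the inequalities work out'' is exactly what is not established, and as written the induction does not close.

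The repair is to apply Euler's bound \emph{per component}: if $G$ has components $G_1,\dots,G_c$ with $|V(G_i)|=n_i\ge 3$, then $e(G)\le\sum_i(3n_i-6)=3n-6c$, so $e(G)\ge 3k-2$ gives $n\ge k+2c-\tfrac23$, hence $n\ge k+2c$ by integrality, and a spanning forest has $n-c\ge k+c\ge k+1$ edges. (Under $\delta(G)\ge 4$ every component automatically has $n_i\ge 3$, so this slots directly into your case split.) This is in fact the paper's entire proof: it dispenses with the minimum-degree dichotomy, handles a possible $K_2$ component by deleting it and invoking the induction hypothesis on $e(G)-1\ge 3k-3>3k-5$, and otherwise derives $n\ge k+2t$ from the componentwise Euler bound and takes a spanning forest. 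Your vertex-deletion branch for $\delta(G)\le 3$ is correct but turns out to be unnecessary once the componentwise estimate is in place.
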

 
\begin{proof}
 We will prove the statement by induction on $k$. If $k = 3$, then $3k-5 = 4 > 3 = \binom{k}{2}$. Hence, the induction start follows by Lemma \ref{lemma1}. We assume now the bound holds for $k$, where $k \ge 3$, and we will prove it for $k+1$. Let $G$ be  a planar graph with at least $3(k+1)  - 5$ edges and order $n$. We may assume that $G$ has no isolated vertices. Let $G_1,\ldots, G_t$  be the connected components of $G$ of order $n_1,\ldots,n_t$, where $t \geq 1$. If $G$ has a component, say $G_t$, isomorphic to $K_2$, then consider the planar graph $G^* = G - G_t$, and observe that
 \[e(G^*) = e(G) -1 \ge 3(k+1) - 5 -1 = 3k-3 > 3k - 5.\]
Hence, by the induction hypothesis, $G^*$ contains a forest $F$ on at least $k$ edges, and thus $F \cup G_t$ is a forest contained in $G$ that has at least $k+1$ edges, so we are done. Therefore, we may assume that $n_i \ge 3$ for all $1 \le i \le t$. Since every component in $G$ is planar, we have
\[3n-6t = \sum_{i=1}^t (3n_i-6) \ge \sum_{i=1}^t e(G_i) = e(G) \ge 3(k+1)-5 = 3k - 2.\]
This gives $n \ge k+2t - \frac{2}{3}$, so in fact $n \ge k+2t$. On the other hand, every component $G_i$ of $G$ has a spanning tree $T_i$ on $n_i - 1$ edges, which together produce a spanning forest $F = \cup_{i=1}^t T_i$ with
\[e(F) = \sum_{i=1}^t (n_i - 1) = n - t \ge k+t \ge k+1,\]
and we are done. \\
For the sharpness, consider any maximal planar graph on $k$ vertices, which has exactly $3k-6$ edges, but no forest on $k$ edges.
\end{proof}

We can now prove the theorem about zero-sum or almost zero-sum spanning trees in maximal planar graphs.

\begin{theorem}
Let $G$ be a maximal planar graph on $n \geq 7$ vertices.  Let  $f: E(G)  \rightarrow \{ -1 ,1\}$  such that  $\min \{ e(-1), e(1)  \} \geq 3 \lfloor \frac{n-1}{2} \rfloor -5$.  Then  $G$ contains a  zero-sum or almost zero-sum spanning tree, and the bound is sharp.
\end{theorem}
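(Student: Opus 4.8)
The plan is to mirror the proof of Theorem \ref{themdegenerate}, replacing the degeneracy bound by the sharper planar bound of Lemma \ref{la:planar_forest}. Write $k = \lfloor\frac{n-1}{2}\rfloor$; since $n \ge 7$ we have $k \ge 3$, so Lemma \ref{la:planar_forest} is applicable. Let $G^-$ and $G^+$ be the subgraphs of $G$ induced by the $(-1)$-edges and by the $1$-edges, respectively. First I would note that the hypothesis can actually be met: it requires $2(3k-5) \le 3n-6 = e(G)$, a routine check for $n \ge 7$ using $2k \le n-1$. From $\min\{e(G^-),e(G^+)\} \ge 3k-5$ and Lemma \ref{la:planar_forest}, both $G^-$ and $G^+$ contain forests $F^-$ and $F^+$ with at least $k$ edges each.

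Since a maximal planar graph on $n \ge 3$ vertices is connected, I would extend $F^-$ to a spanning tree $T^-$ of $G$ and $F^+$ to a spanning tree $T^+$ of $G$; each has $n-1$ edges, of which at least $k$ are coloured $-1$ (resp. $+1$). Hence $f(T^-) \le (n-1-k) - k = n-1-2k$ and $f(T^+) \ge -(n-1-2k)$, and since $2k = 2\lfloor\frac{n-1}{2}\rfloor \ge n-2$ this gives $f(T^-) \le 1$ and $f(T^+) \ge -1$. If $f(T^-)=1$ or $f(T^+)=-1$, that tree is already almost zero-sum and we are done; otherwise $f(T^-) \le 0 \le f(T^+)$, and since the family of spanning trees of $G$ is a closed family in $G$ (Lemma \ref{matroids}), the Interpolation Lemma (Lemma \ref{interpolation}) produces a spanning tree $Z$ of $G$ with $|f(Z)| \le 1$, i.e. a zero-sum or almost zero-sum spanning tree.

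For sharpness I would construct a maximal planar graph $G$ on $n$ vertices by starting from a triangulation $H$ on $m := \lfloor\frac{n-1}{2}\rfloor \ge 3$ vertices (which has exactly $3m-6$ edges) and then inserting the remaining $n-m$ vertices one at a time, each into a triangular face and joined to its three corners, so that the result is a maximal planar graph containing $H$ as a subgraph. Colour every edge of $H$ with $-1$ and every other edge of $G$ with $1$; then $e(-1) = 3m-6 = 3\lfloor\frac{n-1}{2}\rfloor - 6$, one below the threshold of the theorem, and $e(1) = 3n-6-(3m-6) > e(-1)$. If $Z$ were a spanning tree of $G$ with $|f(Z)| \le 1$, then among its $n-1$ edges at least $\lceil\frac{n-2}{2}\rceil = m$ would be coloured $-1$; but the $(-1)$-edges all lie in $H$, a graph on only $m$ vertices, so any forest inside $H$ has at most $m-1$ edges, a contradiction. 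Hence no zero-sum or almost zero-sum spanning tree exists, and the bound is best possible.

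I expect the main point needing care to be the sharpness half rather than the counting: one must verify the floor/ceiling identities ($2k \ge n-2$, so $f(T^-) \le 1$, and $\lceil\frac{n-2}{2}\rceil = m$), and, more importantly, one cannot simply invoke "an arbitrary maximal planar graph contains a smaller triangulation" as the analogue of the fact used for $d$-trees, since this is false in general (not every triangulation is a stacked one). The fix is exactly as above: build $G$ explicitly by stacking onto the prescribed $H$, which guarantees both maximal planarity and the containment $H \le G$. Everything else is a direct transcription of the $d=5$ degenerate argument with the planarity bound substituted.
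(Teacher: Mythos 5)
Your proof is correct and follows essentially the same route as the paper: Lemma \ref{la:planar_forest} with $k=\lfloor\frac{n-1}{2}\rfloor$ to extract monochromatic forests, completion to spanning trees, the Interpolation Lemma via the cycle-matroid closedness, and a sharpness example built by stacking vertices into triangular faces of a triangulation on $\lfloor\frac{n-1}{2}\rfloor$ vertices. If anything, your sharpness argument is slightly more careful than the paper's, since you explicitly rule out almost zero-sum trees via the bound $\lceil\frac{n-2}{2}\rceil=m$ rather than only the zero-sum case.
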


\begin{proof}  
Observe that the condition $\min \{ e(-1), e(1)  \} \geq 3 \lfloor \frac{n-1}{2} \rfloor -5$ can always be satisfied since $2(3\lfloor \frac{n-1}{2} \rfloor -5) \leq 3n-6 = e(G)$.
To show the existence of the desired spanning tree, we use Lemma \ref{la:planar_forest} with $k = \lfloor \frac{n-1}{2} \rfloor \geq  3$.  Since $\min \{ e(-1), e(1)  \} \geq 3 \lfloor \frac{n-1}{2} \rfloor -5$, there exist forests  $F^-$ and $F^+$, each on at least $\lfloor \frac{(n-1)}{2} \rfloor$ edges such that all edges of $F^-$ are coloured $-1$ and all edges of $F^-$ are coloured $1$. Complete $F^-$ to a spanning tree $T^-$ on  $n-1$  edges and complete $F^+$ to a spanning tree $T^+$ on $n-1$ edges. Then we have that $f(T^-) \leq 1$, while $f(T^+) \geq -1$.  If $f(T^-) = 1$  or $f(T^+) = -1$, $T^-$  or $T^+$ is an almost zero-sum tree. Hence, we can assume that $f(T^-) \leq 0$ and $f(T^+) \geq 0$, and so it follows with Lemma \ref{interpolation} that there is a  zero-sum or almost zero-sum spanning tree.

To show the sharpness, we proceed in an analogous way to the proof of sharpness for Theorem \ref{themdegenerate}. We start with $K_3$ embedded in the plane and add, recursively, new vertices such that, each time a new vertex is added, it is made adjacent to the three vertices bounding the current outer face. In this way, we can obtain, for any $3 \leq k \leq n$, a maximal planar graph of order $n$ containing an induced maximal planar subgraph of order $k$. So, for $n\geq 7$, let $G$ be a maximal planar graph of order $n$ containing a maximal planar subgraph $P$ on $\lfloor\frac{n-1}{2} \rfloor$ vertices.  Let  $f: E(G)  \rightarrow \{ -1 ,1\}$ such that  the edges of $P$ are coloured $-1$ and the remaining edges are all coloured $1$.  Observe that $e(1) >  e(-1) =  \lfloor \frac{n-1}{2} \rfloor-6$.  To  have a zero-sum spanning tree of $G$, we need precisely a forest of $\lfloor \frac{n-1}{2} \rfloor$ edges coloured  $-1$,  which we have to take from $P$, and this is impossible since $|V(P)| = \lfloor \frac{n-1}{2} \rfloor$.
\end{proof}

\section{Zero-sum connectivity}
 
\begin{theorem}\label{thm:connected}
Let $n\geq 6$ and $f:E(K_n)\to \{-1,1\}$ such that  $\min\{e(-1),e(1)\}\geq \left\lceil\frac{n+1}{2}\right\rceil$. Then for every two vertices $x$ and $y$, there is a zero-sum path of length at most 4 with $x$ and $y$ the end vertices of this path.  Furthermore the lower bound is sharp.

\end{theorem}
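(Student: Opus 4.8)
The plan is to fix two vertices $x$ and $y$ and exhibit a zero-sum path between them of length at most $4$, under the hypothesis $\min\{e(-1),e(1)\}\geq \lceil\frac{n+1}{2}\rceil$. The natural starting point is to use the edge $xy$ together with short paths, and to invoke an Interpolation-type argument on a suitable family of $xy$-paths. First I would observe that for any intermediate vertex $z \notin \{x,y\}$, the path $x\,z\,y$ of length $2$ is zero-sum precisely when $f(xz) = -f(zy)$; and the path $x\,z\,w\,y$ of length $3$, or a length-$4$ path through two intermediate vertices, gives a lot of flexibility in the achievable weight. So the first step is to set up, for the pair $(x,y)$, the family $\mathcal{P}_{x,y}$ of all $xy$-paths of length exactly $4$ (using four intermediate-or-end edges on five vertices $x, a, b, c, y$), show it is a closed family on the relevant ground set in the sense of edge-replacement within $K_n$, and verify that along an edge-replacement chain the weight changes by $0$ or $\pm 2$, so that it suffices to find one path of weight $\le 0$ and one of weight $\ge 0$ in this family (or weight $0$ directly), exactly as in Lemma~\ref{interpolation}. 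Since length-$4$ paths have an even number of edges, the parity obstruction is the right one: any weight hit must be even, so an interpolation argument yields weight exactly $0$.

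Next I would produce the two paths of opposite-signed weight. Here is where the degree/counting hypothesis enters. With $\min\{e(-1),e(1)\}\geq \lceil\frac{n+1}{2}\rceil$ one expects that neither colour class can be too sparse locally; in particular one wants to find an intermediate vertex $a$ with $f(xa)=1$ and an intermediate vertex $a'$ with $f(xa')=-1$ (and similarly near $y$), which is easy unless $x$ (or $y$) is monochromatic to almost all of $K_n$. The delicate configurations are exactly those where $x$ or $y$ sees only one colour, or nearly so; these must be handled by a separate case analysis, using the global bound on $\min\{e(-1),e(1)\}$ to guarantee enough edges of the minority colour elsewhere in the graph to route a path of weight of the desired sign through two intermediate vertices not adjacent to $x$ in the minority colour. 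I would organise this as: (i) the generic case, where both $x$ and $y$ are incident with edges of both colours, in which one picks a high-weight length-$4$ path and a low-weight one almost freely and interpolates; (ii) the case where, say, $x$ is monochromatic, where one shifts the "free" choices to the edges among $\{a,b,c\}$ and near $y$; (iii) the case where both $x$ and $y$ are monochromatic, the tightest case, handled directly by explicit construction using the fact that $n\geq 6$ and that the minority colour has at least $\lceil\frac{n+1}{2}\rceil$ edges, hence a matching or short path of size roughly $n/4$ disjoint from the problematic vertices.

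For sharpness I would exhibit a colouring with $\min\{e(-1),e(1)\} = \lceil\frac{n+1}{2}\rceil - 1$ and a pair $x,y$ with no zero-sum $xy$-path of length $\le 4$. The natural candidate is to make the minority colour (say $-1$) a star or a near-star centred away from $x$ and $y$, or to concentrate all $-1$-edges on a small vertex set $S$ avoiding $x$ and $y$, of size chosen so that $\binom{|S|}{2}$ is just below $\lceil\frac{n+1}{2}\rceil$; then any $xy$-path of length $\le 4$ uses at most two vertices of $S$, hence at most one $-1$-edge, forcing the weight to be at least $\ell - 2 \ge 0$ where $\ell \in\{2,3,4\}$ is the length, and a parity check rules out weight $0$ in the odd-length cases while the even-length cases give weight $\ge 2$ (one would tune the construction so even a single $-1$-edge is not enough). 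I would double-check that with exactly $\lceil\frac{n+1}{2}\rceil - 1$ minority edges one can indeed pack them into such a set disjoint from $\{x,y\}$ for all $n \geq 6$.

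The main obstacle I anticipate is case (iii) above: when both endpoints are (near-)monochromatic in the majority colour, the length-$4$ paths through two free intermediate vertices must carry $-1$-edges only on the two middle edges $ab$ and $bc$, and one needs the minority colour to be rich enough, and suitably spread, to realise both a weight-$(-?)$ and a weight-$(+?)$ configuration there; making the interpolation go through in this pinched regime, and matching it exactly to the bound $\lceil\frac{n+1}{2}\rceil$, is where the real work lies. A secondary subtlety is ensuring the family of length-$4$ $xy$-paths is genuinely closed under edge-replacement while keeping the endpoints fixed — one may need to allow auxiliary moves through length-$2$ and length-$3$ $xy$-paths and treat the whole collection $\bigcup_{2\le \ell\le 4}\{xy\text{-paths of length }\ell\}$ as the working family, being careful that only the even-length members can be zero-sum.
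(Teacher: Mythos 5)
There is a genuine gap, and it sits at the heart of your plan. The family of $xy$-paths of length $4$ with \emph{both endpoints fixed} is not a closed family in the sense of Lemma~\ref{interpolation}: a short degree-sequence check shows that any two distinct $xy$-paths of length $4$ differ in at least two edges (if $P'$ contains three edges of $P=x\,a\,b\,c\,y$ and is again an $xy$-path of length $4$, then $P'=P$), so no nontrivial edge-replacement chain stays inside the family. The local-amoeba property of paths lets the endpoints move, which is exactly what you cannot afford here; and mixing in length-$2$ and length-$3$ $xy$-paths as ``auxiliary moves'' does not help, since an edge-replacement preserves the number of edges. Consequently the interpolation mechanism around which you organise the proof is unavailable, and what remains is precisely the case analysis you defer (``this is where the real work lies''), which is the entire content of the theorem. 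The paper argues directly and elementarily: if some vertex $u$ has $f(xu)\neq f(uy)$, then $x$-$u$-$y$ is already zero-sum; otherwise $V\setminus\{x,y\}$ splits into $A=\{u: f(xu)=f(uy)=1\}$ and $B=\{u: f(xu)=f(uy)=-1\}$, and a case analysis on $|B|\in\{0,1,\geq 2\}$, using the bound $\min\{e(-1),e(1)\}\geq\lceil\frac{n+1}{2}\rceil$ only to locate two adjacent edges of the minority colour among the intermediate vertices, produces the required length-$4$ path.

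Your sharpness construction also fails. If the $-1$-edges are concentrated on a clique or a star supported on a set $S$ disjoint from $\{x,y\}$, they contain a path $a$-$b$-$c$ of two $-1$-edges, and then $x$-$a$-$b$-$c$-$y$ is zero-sum. To block every length-$4$ path between two all-$(+1)$ endpoints, the $-1$-graph must be $P_3$-free, hence a matching; but a matching avoiding both $x$ and $y$ has at most $\lfloor\frac{n-2}{2}\rfloor=\lceil\frac{n+1}{2}\rceil-2$ edges, one fewer than sharpness requires. The paper's extremal colouring is a near-perfect matching $\lfloor\frac{n}{2}\rfloor K_2$ of $-1$-edges in which the edge $xy$ itself is one of the matching edges --- precisely the configuration your ``minority colour away from $x$ and $y$'' ansatz excludes.
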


\begin{proof}
For every pair of vertices $x$ and $y$ having another vertex $u$ such that $f(ux)\neq f(uy)$ we are done, as $x$-$u$-$y$ is a zero-sum path. So let $x,y$ be a pair of vertices such that for every $u\in V\setminus \{x,y\}$, $f(xu)=f(uy)$. This splits $V\setminus\{x,y\}$ into two sets $A$ and $B$ (one possibly empty), where $A:=\{u:f(xu)=f(uy)=1\}$ and $B:=\{u:f(xu)=f(uy)=-1\}$.
Assume, without lost of generality, that $|A|\geq |B|$ (otherwise we can multiply all colours by $-1$ and do as follows).\\

\noindent
\emph{Case 1:} If $|B|=0$ we have $|A|=n-2\geq 4$. By hypothesis $e(-1)\geq \left\lceil\frac{n+1}{2}\right\rceil$, so even if $f(xy)=-1$ we still have $\left\lceil\frac{n+1}{2}\right\rceil-1$ edges coloured $-1$  in the subgraph induced by $A$. Since $\left\lceil\frac{n+1}{2}\right\rceil-1\geq \left\lceil\frac{n-2}{2}\right\rceil$, this is enough to guarantee the existence of three vertices $u,v,w\in A$ such that $f(uv)=f(vw)=-1$. Then, $x$-$u$-$v$-$w$-$y$ is a zero-sum path. \\

\noindent
\emph{Case 2:}  If $|B|=1$ we have $|A|=n-3\geq 3$. Let $z$ be the only vertex in $B$. If there are vertices $u,v \in A$ such that $f(zu)=f(zv)=-1$ then $x$-$u$-$z$-$v$-$y$ is a zero-sum path.  Suppose there is only one vertex $u\in A$ such that $f(zu)=-1$ then, either there is a vertex $v\in A$ such that $f(vu)=1$, or there are two vertices $v,w\in A$ such that $f(vu)=f(wv)=-1$; in the first case $x$-$v$-$u$-$z$-$y$ is a zero-sum path, while in the second case $x$-$v$-$u$-$w$-$y$ is a zero-sum path. It remains to consider the case where $f(zu)=1$ for every $u\in A$; in this case note that outside $A$ there are at most three edges coloured $-1$ and, since $ \left\lceil\frac{n+1}{2}\right\rceil\geq 4$, we must have an edge $uv$, with $u,v\in A$, such that $f(uv)=-1$, hence $x$-$u$-$v$-$z$-$y$ is a zero-sum path.\\

\noindent
\emph{Case 3:} If $|B|\geq 2$ we have $|A|=n-4\geq 2$. Consider $u$, $v$, $z$, $w$ vertices such that $u, v\in A$ and $z, w\in B$. If $f(uz)=f(uw)=1$ then $x$-$z$-$u$-$w$-$y$ is a zero-sum path. So, we may assume without lost of generality that $f(uz)=-1$. Then $f(vu)=-1$ (otherwise $x$-$v$-$u$-$z$-$y$ would be a zero-sum path), and $f(vz)=1$ (otherwise $x$-$v$-$z$-$u$-$y$ would be a zero-sum path) but then $x$-$z$-$v$-$u$-$y$ is a zero-sum path.\\

\medskip

\noindent  In order to show that the lower bound in Theorem \ref{thm:connected} is best possible we exhibit, for $n=4$ and $n=5$, a colouring function $f^*:E(K_n)\to \{-1,1\}$ with $\min\{e(-1),e(1)\}=\left\lceil\frac{n+1}{2}\right\rceil$ and two vertices such that there is no zero-sum path of length at most 4 between them; and, for $n\geq 6$,  a colouring $f^*:E(K_n)\to \{-1,1\}$ with $\min\{e(-1),e(1)\}=\left\lceil\frac{n+1}{2}\right\rceil-1$ and a pair of vertices with the same property.

Let $n\in \{4,5\}$, and let $f^*:E(K_n)\to \{-1,1\}$ be a colouring such that the $-1$-edges induce a $K_3$, being $a,b,c$ its $3$ vertices. Then $e(-1)=3=\left\lceil\frac{n+1}{2}\right\rceil$, and there is no zero-sum path of length at most $4$ with $a$ and $b$ being the end vertices of this path.

Let $n\geq 6$. Consider a colouring $f^*:E(K_n)\to \{-1,1\}$ such that the set of edges coloured $-1$ induce a $\lfloor \frac{n}{2} \rfloor K_2$. Hence, $e(-1)= \lfloor \frac{n}{2} \rfloor = \left\lceil\frac{n+1}{2}\right\rceil-1$, and for two vertices  $a$ and $b$ such that $f^*(ab)=-1$ there is no zero-sum path of length at most 4 between them; to see this, note that such a path has to start and end with edges coloured $1$, thus the middle edges have to be both coloured $-1$, which is impossible.
\end{proof}

Next we show that the lower bound is in fact critical to zero-sum connectivity without any restriction on the length of the zero-sum paths.

With $\min \{e(-1), e(1)\} \leq  \lceil \frac{n-1}{2} \rceil$, there is a colouring preventing zero-sum path of any length (not only 2 or 4)  --- we take a matching of  $\lceil \frac{n-1}{2} \rceil$ edges coloured $-1$ and the rest coloured $+1$,  so that no two vertices adjacent by an edge coloured $-1$ have a zero-sum path between them.

Next we show that distance four zero-sum paths are inevitable.  We cannot force every two vertices to have a zero-sum path of length two even if $\min\{e(-1), e(1)\}  = \lfloor \frac{n(n-1)}{4} \rfloor$ (half the edges).  Consider $K_n$  (n sufficiently large) and choose two vertices $x$ and $y$.  Connect $x$ and $y$ to all other vertices by edges coloured  $-1$.  The rest of the edges  are coloured $-1$ and  $+1$, just to get the number of $-1$ and $+1$ edges  as equal as possible.  Every $(x,z,y)$-path has weight $-2$ and $|e(-1)-e(1)| \leq 1$.  

So $n$ sufficiently large is just to make sure the first $2(n-2)$ edges coloured $-1$  are at most  half the number of edges,  namely  $2(n-2)  \leq \frac{n(n-1)}{4}$ which is  true for $n \geq 7$.

\section{Conclusion}

We have studied, for various graph families $\cal F$,  the problem of finding conditions on a 2-colouring of $E(H)$ ($H$ is, in most cases, the complete graph $K_n$) with colours $-1$ and $+1$ such that, given a graph $G$ in $\cal F$, there is, in any such 2-colouring of $H$, a copy of $G$ such that the sum of the colours on $E(G)$ is zero or $\pm 1$. Usually the conditions on the colouring take the form of bounds on the number of edges coloured $-1$ or $+1$.  Most of the bounds we have obtained are sharp, but not all, for example in the case  of spanning trees of diameter at most three.

We have given a unified treatment of this problem by obtaining most of our results as consequences of a Master Theorem which we have applied for classes $\cal F$ such as spanning subtrees or spanning paths. It would be interesting to be able to do the same for some  families of dense graphs. 

Another intriguing problem is to find conditions such that any $\pm1$-colouring of $K_{4n}$ has a zero-sum matching on $2n$ edges. The main difficulty in applying our techniques is that  the graph $2nK_2$  is not a local  amoeba in $K_{4n}$, hence the various copies of matchings in $K_{4n}$ is not a  closed family and we cannot use Lemma \ref{interpolation}  and Theorem \ref{MasterThm}.

The following construction shows that, for infinitely many integers $n$, there are $\{-1,1\}$-colourings of the complete graph $K_{4n}$, where the number of edges of both colours is nearly balanced, but such a colouring does not contain a zero-sum spanning matching. To see this, let $n$ be a square, say $n = t^2$. Let $ A \cup B$ be a partition of the vertex set of $K_{4n}$ such that $|A| = 2n + t - 1$ and $|B| = 2n - t + 1$.  We colour 1 all edges with one end vertex in $A$ and one in $B$, while the remaining edges are all coloured $-1$. Then there are $|A| |B| = (2n + t - 1)(2n - t + 1) = 4n^2 - t^2 + 2t -1$ edges coloured 1, and there are ${4n \choose 2} -  |A| |B|  = 2n(4n-1) - ( 4n^2 - n + 2 \sqrt{n} -1) = 4n^2 - n - 2t + 1$ edges coloured $-1$. That means that we have a difference of only $4t - 2 = 4 \sqrt{n} - 2$ between these two numbers. To see that there is no spanning zero-sum matching, observe that we need $n$ matching edges from each colour, meaning that we would have $n$ 1-coloured edges crossing between $A$ and $B$, which would leave $|A|-n = n+t-1= t^2 + t - 1 = t(t+1) - 1$ free vertices from $A$ and $|B| - n = n-t+1 =  t^2 - t + 1 = t(t-1) + 1$ vertices from $B$. However, since both numbers  $t(t+1) - 1$ and $t(t-1) + 1$ are odd, we can take at most $\frac{t(t+1)-2}{2} + \frac{t(t-1)}{2} = t^2 - 1 = n-1$ matching edges coloured -$1$. Hence, there is no spanning zero-sum matching.

So we pose the following problem:

\begin{problem}
Suppose $f : E(K_{4n}) \rightarrow \{ -1,1 \}$  is such that  $e(-1)= e(1)$ ---  does  a zero-sum matching always exist?

\end{problem}


\bibliographystyle{plain}
\bibliography{zerosumbib}
 
\end{document}